\documentclass[11pt,a4paper]{amsart}
\usepackage[T1]{fontenc}
\usepackage{lmodern,amsmath,amssymb}
\usepackage[margin=30mm]{geometry}
\usepackage[colorlinks=true,linkcolor=blue,citecolor=blue,urlcolor=blue]{hyperref}
\hypersetup{
 pdftitle={Disproof of a Conjectured Upper Bound for the Davenport Constant},
 pdfauthor={Guoqing Wang},
 pdfsubject={Zero-sum sequences in finite abelian groups}}
\newcommand{\dd}{\mathsf d}
\newcommand{\DD}{\mathsf D}
\newcommand{\kk}{\mathsf k}
\newcommand{\rr}{\mathsf r}
\newcommand{\NN}{\mathbb N}
\newcommand{\Nark}{\mathsf N}
\newcommand{\ZZ}{\mathbb Z}
\newcommand{\FF}{\mathbb F}
\newcommand{\cF}{\mathcal F}
\DeclareMathOperator{\ord}{ord}
\DeclareMathOperator{\supp}{supp}
\DeclareMathOperator{\vterm}{v}
\DeclareMathOperator{\lcm}{lcm}

\newtheorem{theorem}{Theorem}[section]
\newtheorem{conjecture}{Conjecture}

\newtheorem{lemma}[theorem]{Lemma}
\newtheorem{corollary}[theorem]{Corollary}
\theoremstyle{definition}

\newtheorem{problem}[theorem]{Problem}
\theoremstyle{remark}
\newtheorem{remark}[theorem]{Remark}

\makeatletter
\@ifundefined{subjclassname@2020}{%
 \@namedef{subjclassname@2020}{\textup{2020} Mathematics Subject Classification}%
}{}
\makeatother

\title[Disproof of a Davenport bound]{Disproof of a Conjectured Upper Bound for the Davenport Constant}
\author{Guoqing Wang}
\address{School of Mathematical Sciences\\
Tiangong University\\
Tianjin 300387, PR China}
\email{gqwang1979@aliyun.com}
\subjclass[2020]{11B30, 20K01}
\keywords{Davenport constant, zero-sum sequences, cross number, finite abelian groups, Narkiewicz--\'{S}liwa conjecture}

\begin{document}
\begin{abstract}
Let $G= C_{n_1}\oplus\cdots\oplus C_{n_r}$ be a finite
abelian group with $1<n_1\mid\cdots\mid n_r$, and let
$\rr(G)=r$ denote its rank. The Davenport constant $\DD(G)$
is the least integer $\ell$ such that every sequence of
$\ell$ elements of $G$ contains a nonempty zero-sum subsequence,
and $\DD^*(G)=1+\sum_{i=1}^r(n_i-1)$ is its classical lower
bound. A long-standing conjecture
\cite[Conjecture~3.7]{GG06} asserts that
$\DD(G)\le\DD^*(G)+\rr(G)-1$. In this paper, we disprove
this conjecture. More strongly, we prove that
$\sup_{\rr(G)=r}\bigl(\DD(G)-\DD^*(G)\bigr)=\infty$
for every fixed $r\ge8$. Thus the classical lower bound
does not approximate the Davenport constant within an
additive error depending only on the rank. Our result
also disproves the Narkiewicz--\'{S}liwa conjecture of 1982
\cite{NS82} on the Narkiewicz constant, arising in algebraic
number theory from the quantitative study of algebraic
integers with unique factorization. The same amplification
of the Davenport excess yields counterexamples to Girard's
conjecture \cite[Conjecture~1.2]{Girard08} on the cross numbers
of long zero-sum-free sequences.
We also prove the uniform upper bound
$\DD(G)\le\frac{16}{5}\rr(G)\exp(G)$ for every nontrivial finite abelian
group $G$. Together with examples whose Davenport excess is at
least $\rr(G)\exp(G)/48$, this shows that the product of rank
and exponent gives the correct general scale for the excess,
up to absolute constants.
\end{abstract}
\maketitle
\date{}
\section{Introduction}

The Davenport constant is a classical invariant of zero-sum theory
with close connections to factorization theory \cite{GG06,GHK06,GGZ26}.
Let $G=C_{n_1}\oplus\cdots\oplus C_{n_r}$ with
$1<n_1\mid\cdots\mid n_r$, and put $\dd(G)=\DD(G)-1$ and
$\dd^*(G)=\sum_{i=1}^r(n_i-1)$.
The \emph{Davenport excess}
\begin{equation}\label{eq:excess}
\delta(G)=\dd(G)-\dd^*(G)=\DD(G)-\DD^*(G)
\end{equation}
measures how far the maximum zero-sum-free length exceeds the
classical basis construction. It is nonnegative and vanishes for
$p$-groups and groups of rank at most two \cite[Chapter~5]{GHK06}.
The following conjecture would bound it in terms of the rank alone.

\begin{conjecture}[Gao--Geroldinger,
{\cite[Conjecture~3.7]{GG06}}]\label{conj:upper-bound}
Every nontrivial finite abelian group $G$ satisfies
$$
\delta(G)\le\rr(G)-1.
$$
\end{conjecture}

The first Narkiewicz constant $\Nark_1(G)$ arises in counting
algebraic integers with unique factorization. Combinatorially, it
is the maximum length of an indexed zero-sum sequence over
$G\setminus\{0\}$ whose indices have a unique partition into
minimal zero-sum subsequences \cite{N79,GGW11}.
Narkiewicz and \'{S}liwa proposed the following formula in 1982.

\begin{conjecture}[Narkiewicz--\'{S}liwa,
{\cite{NS82}}]\label{conj:narkiewicz-sliwa}
For every nontrivial finite abelian group $G$,
$$
\Nark_1(G)=\sum_{i=1}^r n_i.
$$
\end{conjecture}

Since $\DD(G)\le\Nark_1(G)$, this formula would imply
Conjecture~\ref{conj:upper-bound}. Its arithmetic setting and
history are discussed in Section~\ref{sec:narkiewicz}.
Cross numbers measure weighted length, assigning each term the
reciprocal of its order. Girard proposed the following restriction.

\begin{conjecture}[Girard,
{\cite[Conjecture~1.2]{Girard08}}]\label{conj:girard}
If $S$ is a zero-sum-free sequence over $G$ with
$|S|\ge\dd^*(G)$, then
$$
\kk(S)\le\sum_{i=1}^r\frac{n_i-1}{n_i}<r.
$$
\end{conjecture}

Two further problems ask how much Davenport excess is possible.

\begin{problem}[Geroldinger--Grynkiewicz--Zhong,
{\cite[Appendix~B, Problem~4(a)]{GGZ26}}]\label{prob:rank-bound}
Does there exist a function $f:\NN\to[0,\infty)$ such that
$\delta(G)\le f(\rr(G))$ for every nontrivial finite abelian
group $G$? In particular, can $f$ be chosen linear?
\end{problem}

\begin{problem}[Liu, {\cite[end of Section~4]{Liu20}}]\label{prob:liu}
Determine $\sup_G\delta(G)/\rr(G)$, where $G$ ranges over all
nontrivial finite abelian groups.
\end{problem}

Our main construction gives an explicit generating zero-sum-free
sequence of length $\dd^*(G_{m,r})+m$ over
$$
G_{m,r}=C_m^{\,r-5}\oplus C_{2m}^{\,4}\oplus C_{6m},
\qquad r\ge8,\quad m\ge2.
$$
Hence $\delta(G_{m,r})\ge m$ (Theorem~\ref{thm:main}), and
$\sup_{\rr(G)=r}\delta(G)=\infty$ for every fixed $r\ge8$.
This answers Problem~\ref{prob:rank-bound} negatively and shows
that Liu's supremum in Problem~\ref{prob:liu} is infinite even at
rank eight. The error in the Narkiewicz--\'{S}liwa formula is
also unbounded at every fixed rank $r\ge8$.
An additional cyclic factor yields sequences of length exactly
$\dd^*(G)$ with arbitrarily large cross numbers at every fixed
rank $r\ge9$ (Theorem~\ref{thm:cross}), disproving Girard's conjecture.

We also sharpen the uniform dependence of upper bounds on rank
and exponent. Writing $E=\exp(G)$, classical bounds of order
$E\log|G|$ have the refined form
$$
\DD(G)\le E\left(1+\log\frac{|G|}{E}\right),
$$
due to van Emde Boas--Kruyswijk \cite{vEBK69} and rediscovered
by Meshulam \cite{Mesh90}. Our probabilistic group-algebra
argument proves, for every nontrivial finite abelian group $G$,
$$
\DD(G)\le\frac{16}{5}\rr(G)\exp(G),
\qquad 0\le\delta(G)<\frac{16}{5}\rr(G)\exp(G)
$$
(Theorem~\ref{thm:rank-exponent}). For $C_n^r$, this replaces
the classical estimate $n(1+(r-1)\log n)$ by $\frac{16}{5}rn$, removing
the logarithmic factor with a constant independent of both
$r$ and $n$. The explicit family
$J_{m,t}=G_{m,8}^{\oplus t}$ satisfies
$$
\delta(J_{m,t})\ge
\frac{\rr(J_{m,t})\exp(J_{m,t})}{48}
\qquad(m\ge2,\ t\ge1).
$$
Thus the product of rank and exponent gives the correct general
scale for the excess, up to absolute constants.

Section~\ref{sec:construction} gives the explicit construction
and its general lifting form. Sections~\ref{sec:narkiewicz}
and~\ref{sec:cross} treat the Narkiewicz--\'{S}liwa and Girard
conjectures, respectively. Section~\ref{sec:rank-exponent}
develops the bounds on the Davenport excess.

\section{Notation and preliminaries}\label{sec:notation}

We write $\NN=\{1,2,\ldots\}$ and $\NN_0=\NN\cup\{0\}$.
All groups are finite, abelian, and written additively.
For $n\in\NN$, the group $C_n$ is cyclic of order $n$;
factors $C_1$ are omitted from decompositions.
For a nontrivial group $G$, its rank $\rr(G)$ is the
minimum number of generators, equivalently the number
of nontrivial factors in its invariant-factor decomposition.
The order and exponent of $G$ are denoted by $|G|$
and $\exp(G)$, respectively.
For $g\in G$, its order is denoted by $\ord(g)$.
A basis of $G$ is an independent generating set.

Let $\cF(G)$ be the free abelian monoid with basis $G$.
Its elements are called sequences over $G$. A sequence
may be written as
$$
 S=g_1\cdots g_\ell
   =\prod_{g\in G}g^{\vterm_g(S)},
$$
where $\vterm_g(S)\in\NN_0$ is the multiplicity of $g$.
Its length, sum, and support are
$$
 |S|=\sum_{g\in G}\vterm_g(S),\qquad
 \sigma(S)=\sum_{g\in G}\vterm_g(S)g,\qquad
 \supp(S)=\{g:\vterm_g(S)>0\}.
$$
The empty sequence is denoted by $1$; it has length
zero and sum zero.
A subsequence $T$ of $S$ is a divisor of $S$ in $\cF(G)$,
and we write $T\mid S$.
We use
$$
 \Sigma_0(S)=\{\sigma(T):T\mid S\},
$$
so the empty subsequence sum is included.

A sequence is zero-sum free if it has no nonempty
zero-sum subsequence. It is a minimal zero-sum sequence
if it is nonempty, has sum zero, and has no proper
nonempty zero-sum subsequence.
The small Davenport constant $\dd(G)$ is the maximum
length of a zero-sum-free sequence over $G$.
The large Davenport constant $\DD(G)$ is the least integer
$\ell$ such that every sequence of length $\ell$ over $G$
has a nonempty zero-sum subsequence. Equivalently, it is
the maximum length of a minimal zero-sum sequence, and
\begin{equation}\label{eq:small-large}
 \DD(G)=\dd(G)+1.
\end{equation}
For a chosen invariant-factor decomposition
$G=C_{n_1}\oplus\cdots\oplus C_{n_r}$ with
$1<n_1\mid\cdots\mid n_r$, put
$$
\dd^*(G)=\sum_{i=1}^r(n_i-1),\qquad
\DD^*(G)=\dd^*(G)+1.
$$
The Davenport excess $\delta(G)$ was defined in~\eqref{eq:excess}.

The first Narkiewicz constant $\Nark_1(G)$ is the maximum length
of an indexed zero-sum sequence over $G\setminus\{0\}$ whose
index set has a unique partition into minimal zero-sum
subsequences; see \cite{N79,GGW11,GPZ13}.
The indexing distinguishes equal group terms occurring in
different positions.

For a sequence $S=g_1\cdots g_\ell$ over $G\setminus\{0\}$, its
\emph{cross number} is
$$
\kk(S)=\sum_{i=1}^{\ell}\frac1{\ord(g_i)}.
$$
The little cross number $\kk(G)$ is the maximum of $\kk(S)$
over all zero-sum-free sequences over $G$. Thus $\dd(G)$ and
$\kk(G)$ measure ordinary and weighted length under the same
zero-sum restriction. All logarithms in this paper are natural.

\section{Disproof of Conjecture~\ref{conj:upper-bound}}\label{sec:construction}

\subsection{An explicit family with unbounded excess}\label{subsec:explicit}

The classical basis sequence $\prod_{i=1}^r e_i^{n_i-1}$
is zero-sum free and gives $\dd(G)\ge\dd^*(G)$.
Equality holds for $p$-groups and groups of rank at most two;
see \cite[Chapter~5]{GHK06} and
\cite[Theorems~3.3.6, 3.4.8, and~3.5.3; Section~3.10]{GGZ26}
for the classical results and their history.

Equality fails in general. Baayen's example $C_2^4\oplus C_6$
was recorded in \cite[Theorem~8.1]{vEB69}, and direct powers
give unbounded excess while increasing the rank
\cite[Corollary~8.2]{vEB69}.
Geroldinger and Schneider \cite{GS92} found rank-four examples
with positive excess. Mazur \cite{Mazur92} proved logarithmic
lower growth in the rank for $C_2^{r-1}\oplus C_{2k}$ with
fixed odd $k>1$. These results do not give unbounded excess
at fixed rank.

Conjecture~\ref{conj:upper-bound}, recorded in
\cite[Conjecture~3.7]{GG06} and discussed in \cite[Section~2]{BB07},
would bound the excess by $r-1$ independently of the orders
of the cyclic factors. The following construction disproves it.

\begin{theorem}\label{thm:main}
Let $r\ge8$ and $m\ge2$ be integers, and put
$G_{m,r}=C_m^{\,r-5}\oplus C_{2m}^{\,4}\oplus C_{6m}$.
This group has rank $r$ and admits an explicit generating
zero-sum-free sequence of length $(r+10)m-r$. Since
$\dd^*(G_{m,r})=(r+9)m-r$, it follows that
$$
\dd(G_{m,r})-\dd^*(G_{m,r})\ge m.
$$
\end{theorem}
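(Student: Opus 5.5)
The proof has two parts: the bookkeeping, which is easy, and the construction of the sequence, which is the real content. For the bookkeeping, the rank of $G_{m,r}$ is $r$, since $m\mid 2m\mid 6m$ and $m\ge2$. Then
$$
\dd^*(G_{m,r})=(r-5)(m-1)+4(2m-1)+(6m-1)=(r+9)m-r .
$$
So it suffices to exhibit a zero-sum-free sequence of length $(r+9)m-r+m$ that generates $G_{m,r}$. The bound $\dd(G_{m,r})-\dd^*(G_{m,r})\ge m$ then follows from the definition of $\dd$.

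For the construction, I would not use the subgroup--quotient inequality $\dd(G)\ge\dd(H)+\dd(G/H)$ with $H=mG_{m,r}\cong C_2^4\oplus C_6$. That inequality only recovers the excess $1$ of Baayen's example \cite[Theorem~8.1]{vEB69}, not an excess of $m$. Instead I would split the sequence as a basis part times a lifted Baayen part,
$$
S=\prod_{i=1}^{r-5} e_i^{\,m-1}\cdot T .
$$
Here $e_1,\dots,e_{r-5}$ generate the $C_m^{r-5}$ factor, and $T$ has length $14m-5$, so that $|S|=(r-5)(m-1)+14m-5=(r+10)m-r$. The intended design of $T$ is as follows.

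\begin{itemize}
\item Start from Baayen's zero-sum-free sequence of length $10$ over $C_2^4\oplus C_6$.
\item Replace its terms by elements of $C_{2m}^4\oplus C_{6m}$ that appear with multiplicities growing linearly in $m$.
\item Couple each block to three of the $C_m$-coordinates. This is where $r\ge8$ is used: three spare $C_m$ factors beyond the five carrying the Baayen structure.
\item Arrange that any nonempty zero-sum subsequence must take prescribed multiplicities modulo $m$ from each block.
\end{itemize}

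Zero-sum freeness would be verified by contradiction. Let $U\mid S$ be a nonempty zero-sum subsequence.

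\begin{enumerate}
\item Project onto each $C_m$-coordinate $e_i$. Since $e_i$ occurs at most $m-1$ times, the multiplicity of $e_i$ in $U$ is determined modulo $m$ by the $T$-part of $U$. This lets us delete the basis part.
\item Project onto the three coupling coordinates. This yields congruences modulo $m$ on how many terms of each block of $T$ lie in $U$.
\item Reduce modulo $m$, i.e.\ pass to the image in $G/mG$, or alternatively project onto $mG\cong C_2^4\oplus C_6$. The block counts then become a subsequence-sum condition over $C_2^4\oplus C_6$, which Baayen's zero-sum freeness forbids unless all counts vanish.
\end{enumerate}

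Generation is checked directly from the $e_i$ together with the terms of $T$.

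The main obstacle is the design of $T$ itself. Its coordinates must be chosen so that the congruences in step 2 force an honest Baayen-type configuration in step 3, and not a degenerate one. My first attempt would be to make each block an arithmetic progression $g_j+k f_j$ in a spare $C_m$-direction $f_j$; this converts the count of a block into a single residue modulo $m$. I would then use the $C_{6m}$ coordinate to separate the $2$-part from the $3$-part, as in Baayen's example. If the proportions of block sizes cannot reach exactly $14m-5$ with this scheme, I would adjust one block length and recheck the case analysis.
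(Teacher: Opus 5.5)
Your plan has a genuine gap: the construction itself is missing, and the frame you set up for it cannot work. First, the count is off by $m$. You have $(r-5)(m-1)+(14m-5)=(r+9)m-r=\dd^*(G_{m,r})$, and indeed $14m-5=\dd^*(C_{2m}^4\oplus C_{6m})$, so you would need $|T|=15m-5$. More seriously, your basis part $\prod_{i=1}^{r-5}e_i^{m-1}$ occupies every $C_m$-direction, including the three you intend to couple $T$ to. Once $e_i^{m-1}$ is present, the $e_i$-coordinate constrains nothing. Whatever the $T$-part of $U$ contributes there is cancelled by some $0\le c_i\le m-1$ copies of $e_i$. So your steps~1 and~2 are incompatible, and $S$ is zero-sum free exactly when the image of $T$ in $C_{2m}^4\oplus C_{6m}$ is zero-sum free. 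With $|T|=15m-5$ this demands $\delta(C_{2m}^4\oplus C_{6m})\ge m$. That is an unbounded-excess statement in rank five which nothing in your plan provides, and it is precisely what the three extra coordinates were meant to avoid. The coupling factors must carry no standalone terms. Pad only $r-8$ factors $C_m$ with $f_i^{m-1}$, and build a sequence of length $18m-8$ over $B_m=C_m^3\oplus C_{2m}^4\oplus C_{6m}$; then $(r-8)(m-1)+18m-8=(r+10)m-r$.

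The design of that sequence, which you rightly call the main obstacle, is the missing idea, and arithmetic progressions are not needed. Take generators $z_i$ ($i\le3$) of order $m$, $y_j$ ($j\le4$) of order $2m$, and $e$ of order $6m$. The paper sets $q_j=y_j+2e$, $q_{4+i}=z_i+\sum_{j\ne i}y_j+2e$, and $S_m=e^{4m-1}\prod_{j=1}^7q_j^{2m-1}$. Consider a zero-sum subsequence $e^c\prod q_j^{a_j}\prod q_{4+i}^{b_i}$. The $z_i$-coordinates give $m\mid b_i$, the $y_j$-coordinates then give $m\mid a_j$, and the $e$-coordinate then gives $m\mid c$. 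This cascade is why three coupling coordinates suffice. Only now does one reduce, and not via $G/mG\cong C_m^r$, which loses exactly the $C_2^4\oplus C_6$ structure you need. Nor can one project onto $mG$, which is generally not a direct summand. Instead one divides the coordinate congruences by $m$, leaving $a_j',b_i'\in\{0,1\}$ and $0\le c'\le3$. The $y$-congruences then say the selected vectors form a zero-sum subset of the seven odd-weight vectors of $\FF_2^4$ other than $v_1+v_2+v_3$. By Lemma~\ref{lem:binary}, its size $h$ lies in $\{0,4\}$, and $c'+2h\equiv0\pmod 6$ with $c'\le3$ forces $h=c'=0$. Conceptually (Theorem~\ref{thm:lifting}), this lifts the length-$10$ sequence $S_1$ over $C_2^4\oplus C_6$, whose support has $8$ elements, through $\ZZ^8/m\Lambda$. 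The rank $8$ is this support size, which is where $r\ge8$ comes from.
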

We begin with the following lemma.

\begin{lemma}\label{lem:binary}
Let $\{v_1,v_2,v_3,v_4\}$ be a basis of $\FF_2^4$, and put
$$
 \Omega=\{v_1,v_2,v_3,v_4,\,
 v_2+v_3+v_4,\,
 v_1+v_3+v_4,\,
 v_1+v_2+v_4\}.
$$
Every nonempty subset of $\Omega$ with sum zero has
cardinality four.
\end{lemma}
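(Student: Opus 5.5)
We would prove Lemma~\ref{lem:binary} by translating zero-sum subsets of $\Omega$ into linear relations and reading off the coefficients in the basis $v_1,\dots,v_4$. Write $w_4=v_2+v_3+v_4$, $w_2=v_1+v_3+v_4$ and $w_3=v_1+v_2+v_4$; each $w_j$ is the sum of all basis vectors other than $v_j$, for $j=4,2,3$ respectively. A subset $A\subseteq\Omega$ splits as $A=B\cup C$ with $B\subseteq\{v_1,\dots,v_4\}$ and $C\subseteq\{w_2,w_3,w_4\}$. Since the $v_i$ are a basis, $\sigma(A)=0$ holds exactly when $\sigma(C)=\sigma(B)$. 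Moreover, $\sigma(B)$ is the indicator vector of $B$, so $B$ is uniquely determined by $C$. Hence the zero-sum subsets correspond bijectively to the $2^3=8$ subsets $C$, with $B=\supp(\sigma(C))$ in coordinates. This reduces the lemma to a case check over $C$.

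The key step is to tabulate $|\sigma(C)|$, the number of nonzero coordinates, for each $C$.
\begin{itemize}
\item If $C=\emptyset$, then $B=\emptyset$, which gives the empty set.
\item If $|C|=1$, then $\sigma(C)$ has weight $3$, so $|A|=1+3=4$.
\item If $|C|=2$, then, for example, $w_4+w_2=v_1+v_2$. In general the sum of two of these vectors is $v_j+v_k$, where $j,k$ are the two omitted indices. This has weight $2$, so $|A|=2+2=4$.
\item If $|C|=3$, the coordinate $v_4$ appears three times and each of $v_1,v_2,v_3$ appears twice. Thus $\sigma(C)=v_4$ has weight $1$, and $|A|=3+1=4$.
\end{itemize}
Every nonempty zero-sum subset therefore has cardinality exactly $4$, as claimed.

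No step is a real obstacle; the only care needed is computing the sums of pairs and of the triple correctly. As a sanity check, the $w$'s are distinct from one another and from the $v_i$, so $\Omega$ has $7$ distinct elements. One could also phrase the argument as saying that $\Omega$ is a seven-element set whose zero-sum subsets all have size four. The direct enumeration above is shortest, so it is the one we would write.
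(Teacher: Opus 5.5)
Your proof is correct, but it takes a different route from the paper's. You parametrize zero-sum subsets $A$ by their part $C$ among the three weight-three vectors. Because $\{v_1,\dots,v_4\}$ is a basis, the companion $B$ is forced to be $\{v_i: i\in\supp(\sigma(C))\}$. The cases $|C|=0,1,2,3$ then give $|A|=0,\ 1+3,\ 2+2,\ 3+1$. The distinctness of the weight-one and weight-three elements, which you list only as a sanity check, is what actually justifies $|A|=|B|+|C|$.

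The paper argues structurally instead. The coordinate-sum functional equals $1$ on all of $\Omega$, so every zero-sum subset has even size. Size $2$ is impossible because the elements are distinct. The eight odd-weight vectors sum to zero, so $\sum_{v\in\Omega}v=v_1+v_2+v_3\notin\Omega$, and passing to complements excludes size $6$.

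The paper's argument needs no explicit sums and explains why only size four can survive. Your enumeration gives more: the complete list of zero-sum subsets, exactly seven, one for each nonempty $C$. The price is a small case check.

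One slip to fix: $w_4=v_2+v_3+v_4$ omits $v_1$, not $v_4$. So your sentence ``each $w_j$ is the sum of all basis vectors other than $v_j$'' is false for $w_4$, which under your convention should be called $w_1$. Your computations use the actual vectors, for instance $w_4+w_2=v_1+v_2$ and $v_4$ occurring three times in the triple sum, so the argument is unaffected.
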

\begin{proof}
With weight taken relative to the chosen basis, the set
$\Omega$ consists of the eight odd-weight vectors with
$w=v_1+v_2+v_3$ removed.
The linear functional which sums the four coordinates
takes the value one on every element of $\Omega$.
Hence a zero-sum subset has even cardinality.
It cannot have cardinality two, since its elements are
distinct.
Each coordinate occurs four times among all eight
odd-weight vectors, so their sum is zero and
$\sum_{v\in\Omega}v=w$.
A six-element zero-sum subset would therefore require
the omitted element of $\Omega$ to equal $w$, which
does not belong to $\Omega$.
Only cardinality four remains.
\end{proof}

\begin{proof}[Proof of Theorem~\ref{thm:main}]
We first construct a generating zero-sum-free sequence over
$$
 B_m=C_m^3\oplus C_{2m}^4\oplus C_{6m}
$$
for every integer $m\ge1$. The construction at $m=1$ will
also be used in Subsection~\ref{sec:lifting}.
Let $z_1,z_2,z_3,y_1,y_2,y_3,y_4,e$ be the coordinate
generators, with
$$
 \ord(z_i)=m,\qquad
 \ord(y_j)=2m,\qquad
 \ord(e)=6m.
$$
When $m=1$, the first three factors are trivial and
$z_1=z_2=z_3=0$. Define
\begin{align}
 q_j&=y_j+2e &&(1\le j\le4),\label{eq:q-first}\\
 q_{4+i}&=z_i+\sum_{\substack{1\le j\le4\\j\ne i}}y_j+2e
       &&(1\le i\le3),\label{eq:q-last}
\end{align}
and put
\begin{equation}\label{eq:explicit-sequence}
 S_m=e^{\,4m-1}\prod_{j=1}^7q_j^{\,2m-1}.
\end{equation}
Its length is
$$
 |S_m|=(4m-1)+7(2m-1)=18m-8.
$$
Moreover, $\supp(S_m)$ generates $B_m$: it contains $e$,
the relations $y_j=q_j-2e$ recover the $y_j$, and
\eqref{eq:q-last} then recovers the $z_i$.

To prove that $S_m$ is zero-sum free, suppose, to the contrary, that it
contains a nonempty zero-sum subsequence
$$
 T=e^c\prod_{j=1}^4q_j^{a_j}\prod_{i=1}^3q_{4+i}^{b_i},
$$
where $c,a_j,b_i$ are integers satisfying
$$
 0\le c\le4m-1,\qquad 0\le a_j,b_i\le2m-1.
$$
The $z_i$-coordinates give $m\mid b_i$.
For each $j\in\{1,2,3,4\}$, the $y_j$-coordinate gives
\begin{equation}\label{eq:y-coordinate}
 a_j+\sum_{\substack{1\le i\le3\\i\ne j}}b_i
 \equiv0\pmod{2m}.
\end{equation}
Consequently $m\mid a_j$ for every $j$.
The $e$-coordinate gives
\begin{equation}\label{eq:e-coordinate}
 c+2\sum_{j=1}^4a_j+2\sum_{i=1}^3b_i
 \equiv0\pmod{6m},
\end{equation}
and therefore $m\mid c$.
Write
\begin{equation}\label{eq:c'aj'bi'}
 c=mc',\qquad a_j=ma_j',\qquad b_i=mb_i',
 \qquad
 0\le c'\le3,\quad a_j',b_i'\in\{0,1\}.
\end{equation}
Dividing each congruence in \eqref{eq:y-coordinate} by $m$ gives
\begin{equation}\label{equation:a'j+=0mod2}
 a'_j+\sum_{\substack{1\le i\le3\\i\ne j}}b'_i
 \equiv0\pmod2
 \qquad (1\le j\le4).
\end{equation}
Let $\{v_1,\ldots,v_4\}$ be a basis of $\FF_2^4$.
The congruences in \eqref{equation:a'j+=0mod2} are precisely
the coordinate equations of
\begin{equation}\label{equation:coordinatEqu}
 \sum_{j=1}^4a'_jv_j
 +b'_1(v_2+v_3+v_4)
 +b'_2(v_1+v_3+v_4)
 +b'_3(v_1+v_2+v_4)=0
 \quad\text{in }\FF_2^4.
\end{equation}
Since every coefficient is either zero or one, the vectors
with coefficient one form a possibly empty zero-sum subset
of the set $\Omega$ in Lemma~\ref{lem:binary}.
Its cardinality is
\begin{equation}\label{eq:h=}
 h=\sum_{j=1}^4a'_j+\sum_{i=1}^3b'_i.
\end{equation}
Lemma~\ref{lem:binary} therefore gives $h\in\{0,4\}$.
Dividing \eqref{eq:e-coordinate} by $m$ and using
\eqref{eq:h=}, we obtain
$$
 c'+2h\equiv0\pmod6.
$$
If $h=4$, this congruence would give $c'\equiv4\pmod6$,
contrary to $0\le c'\le3$. Hence $h=0$, and the same
congruence then gives $c'=0$.
Since the summands in \eqref{eq:h=} are nonnegative,
$h=0$ forces $a'_j=0$ for every $j$ and $b'_i=0$ for
every $i$. By \eqref{eq:c'aj'bi'}, all the original
multiplicities $c,a_j,b_i$ are therefore zero.
Thus $T$ is the empty sequence, a contradiction.
This proves that $S_m$ is zero-sum free.

Now let $m\ge2$ and $r\ge8$, as in the theorem.
Choose a decomposition
$$
 G_{m,r}=B_m\oplus\bigoplus_{i=1}^{r-8}\langle f_i\rangle,
 \qquad \ord(f_i)=m\quad(1\le i\le r-8),
$$
and define
\begin{equation}\label{eq:padded-sequence}
 S_{m,r}=S_m\prod_{i=1}^{r-8}f_i^{\,m-1}.
\end{equation}
When $r=8$, the additional direct sum is trivial and
the product is empty, so $S_{m,8}=S_m$.
Since $S_m$ is zero-sum free, it follows from \eqref{eq:padded-sequence} that $S_{m,r}$ is zero-sum free.
Its support generates $G_{m,r}$, since $\supp(S_m)$
generates $B_m$ and every $f_i$ occurs in $S_{m,r}$.
Moreover,
$$
 |S_{m,r}|=18m-8+(r-8)(m-1)=(r+10)m-r,
$$
which yields $\dd(G_{m,r})\ge(r+10)m-r$.
Since $m\ge2$, it follows that $\rr(G_{m,r})=r$ and
$$
 \dd^*(G_{m,r})
 =(r-5)(m-1)+4(2m-1)+(6m-1)
 =(r+9)m-r.
$$
Subtracting this identity from the lower bound for
$\dd(G_{m,r})$ gives $\delta(G_{m,r})\ge m$.
\end{proof}

For $m\ge r$, the inequality $\delta(G_{m,r})\ge m$
contradicts Conjecture~\ref{conj:upper-bound}. The consequences
for general bounds on the excess are collected in
Section~\ref{sec:rank-exponent}.

\subsection{A general lifting construction}\label{sec:lifting}

In this subsection, we give a general construction based on
a zero-sum-free sequence over a finite abelian group $A$
and a generating set of $A$ containing its support.
If the sequence has length greater than $\dd^*(A)$,
the construction produces groups of fixed rank with
arbitrarily large Davenport excess. The sequence need
not have maximum possible length.

\begin{theorem}\label{thm:lifting}
Let
$A=C_{n_1}\oplus\cdots\oplus C_{n_s}$, where
$1<n_1\mid\cdots\mid n_s$.
Let $A=\langle g_1,\ldots,g_k\rangle$, where
$g_1,\ldots,g_k$ are distinct, and suppose
$$
 T=\prod_{i=1}^k g_i^{t_i},\qquad t_i\in\NN_0,
$$
is zero-sum free of length $L$.
For an integer $m\ge2$, put
$$
 A_{m,k}=C_m^{k-s}\oplus
 C_{mn_1}\oplus\cdots\oplus C_{mn_s}.
$$
Then $A_{m,k}$ has a generating zero-sum-free sequence
of length $m(L+k)-k$. In particular, $\rr(A_{m,k})=k$
and  $\delta(A_{m,k})\ge m\bigl(L-\dd^*(A)\bigr).$
\end{theorem}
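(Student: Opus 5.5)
The plan is to realize $A_{m,k}$ as a quotient of $\ZZ^k$ by $m$ times the relation lattice of the generators $g_1,\ldots,g_k$. The explicit construction in Theorem~\ref{thm:main} is then the special case $A=B_1$ with generators $e,q_1,\ldots,q_7$.

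\textbf{Setup.} Let $\varphi:\ZZ^k\to A$ be the surjection sending the standard basis vector $\epsilon_i$ to $g_i$, and let $K=\ker\varphi$. Then $K$ is a full-rank sublattice with $\ZZ^k/K\cong A$. By the Smith normal form, there is a basis $u_1,\ldots,u_k$ of $\ZZ^k$ and a basis of $K$ of the form $d_1u_1,\ldots,d_ku_k$ with $d_1\mid\cdots\mid d_k$. Uniqueness of invariant factors forces $(d_1,\ldots,d_k)=(1,\ldots,1,n_1,\ldots,n_s)$, with $k-s$ ones; note $k\ge s=\rr(A)$ because the $g_i$ generate. Hence $mK$ has basis $md_1u_1,\ldots,md_ku_k$, and
$$\ZZ^k/mK\cong C_m^{k-s}\oplus C_{mn_1}\oplus\cdots\oplus C_{mn_s}=A_{m,k}.$$
Since $m\ge2$, every factor is nontrivial and $m\mid mn_1\mid\cdots$, so this is the invariant-factor decomposition and $\rr(A_{m,k})=k$.

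\textbf{Construction.} Let $h_i$ be the image of $\epsilon_i$ in $\ZZ^k/mK$, and put
$$U=\prod_{i=1}^k h_i^{\,m(t_i+1)-1}.$$
Its length is $\sum_i\bigl(m(t_i+1)-1\bigr)=m(L+k)-k$, and its support generates $A_{m,k}$ because the $\epsilon_i$ generate $\ZZ^k$.

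\textbf{Zero-sum freeness.} This is the key step, mirroring the divisibility argument in the proof of Theorem~\ref{thm:main}. Suppose $\prod h_i^{c_i}$ is a zero-sum subsequence with $0\le c_i\le m(t_i+1)-1$. Then $c=(c_1,\ldots,c_k)\in mK\subseteq m\ZZ^k$, so $c=mc'$ with $c'\in K$. The bounds give $0\le c_i'\le t_i$, because $mc_i'\le m(t_i+1)-1$. Thus $\prod g_i^{c_i'}$ is a subsequence of $T$ (this uses that the $g_i$ are distinct, so the multiplicities are exactly the $t_i$), and it has sum $\varphi(c')=0$. Since $T$ is zero-sum free, $c'=0$, hence $c=0$ and the subsequence is empty.

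\textbf{Excess bound.} We have
$$\dd^*(A_{m,k})=(k-s)(m-1)+\sum_{i=1}^s(mn_i-1)=m\bigl(\dd^*(A)+k\bigr)-k,$$
and therefore $\delta(A_{m,k})\ge m(L+k)-k-\dd^*(A_{m,k})=m\bigl(L-\dd^*(A)\bigr)$.

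The only step needing care is identifying $\ZZ^k/mK$ with $A_{m,k}$. This requires invariance of the Smith normal form under scaling by $m$ and the count of $k-s$ unit invariant factors. Once that identification is in place, everything else is the short divisibility argument above.
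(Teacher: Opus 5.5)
Your proposal is correct and is essentially the paper's proof: both realize $A_{m,k}$ as $\ZZ^k/m\Lambda$ with $\Lambda=\ker\varphi$, identified via the Smith normal form scaled by $m$; both take the classes of the $\epsilon_i$ with multiplicities $m(t_i+1)-1$; and both pull a zero-sum subsequence back to one of $T$ by writing its multiplicity vector as $m$ times a vector of $\Lambda$. The paper additionally uses the reduction map $\ZZ^k/m\Lambda\to C_m^k$ to note that the lifted terms are distinct and nonzero, a fact your argument does not need.
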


\begin{proof}
Let $\epsilon_1,\ldots,\epsilon_k$ be a basis
of $\ZZ^k$, and let
$\varphi:\ZZ^k\rightarrow A$
be the group epimorphism determined by
$\varphi(\epsilon_i)=g_i$ for $i=1,\ldots,k$.
Its kernel $\Lambda$ has finite index $|A|$ in $\ZZ^k$.
Thus $\Lambda$ is a free abelian group of rank $k$, and
$A\cong\ZZ^k/\Lambda$.
Define
$$
 B=\ZZ^k/m\Lambda,\qquad
 x_i=\epsilon_i+m\Lambda.
$$
The map
$$
 \iota:A\longrightarrow B,\qquad
 \iota(\varphi(z))=mz+m\Lambda\quad(z\in\ZZ^k)
$$
is a well-defined injective homomorphism.
Since $m\Lambda\subseteq m\ZZ^k$, reduction modulo $m$
induces a surjection
$$
 \pi:B\longrightarrow\ZZ^k/m\ZZ^k\cong C_m^k,\qquad
 \pi(z+m\Lambda)=z+m\ZZ^k.
$$
Its kernel is $m\ZZ^k/m\Lambda=\iota(A)$, so
$B/\iota(A)\cong C_m^k$ and $mx_i=\iota(g_i)$.
The elements $\pi(x_i)$ form a basis of $C_m^k$.
In particular, the $x_i$ are distinct and nonzero. Now put
\begin{equation}\label{eq:lifted-sequence}
 U_m=\prod_{i=1}^k x_i^{\,m(t_i+1)-1}.
\end{equation}
Suppose a zero-sum subsequence selects $c_i$ copies
of $x_i$.
Its multiplicity vector $c$ lies in $m\Lambda$,
so $c=ma$ for some $a\in\Lambda$.
Since $0\le ma_i=c_i<m(t_i+1)$ and $a_i$ is an integer,
we have $0\le a_i\le t_i$ for every $i$.
Since $\varphi(a)=0$, the sequence
$\prod_i g_i^{a_i}$ is a zero-sum subsequence of $T$.
It is therefore empty, giving $a=c=0$.
This proves zero-sum freeness.
Each exponent in \eqref{eq:lifted-sequence} is positive,
so its support generates $B$.
Its length is
$$
 |U_m|=\sum_{i=1}^k\bigl(m(t_i+1)-1\bigr)
      =m(L+k)-k.
$$

The Smith normal form of a basis matrix of $\Lambda$
has $k-s$ unit entries followed by
$n_1,\ldots,n_s$.
Multiplying the matrix by $m$ multiplies every Smith
factor by $m$. Thus
$$
 B\cong C_m^{k-s}\oplus
 C_{mn_1}\oplus\cdots\oplus C_{mn_s}=A_{m,k}.
$$
All $k$ factors are nontrivial, and their orders form
a divisibility chain, proving $\rr(A_{m,k})=k$.
Finally,
$\dd^*(A_{m,k})
 =(k-s)(m-1)+\sum_{j=1}^s(mn_j-1)
 =m\bigl(\dd^*(A)+k\bigr)-k.$
Subtracting this identity from the lower bound
$\dd(A_{m,k})\ge|U_m|$ proves $\delta(A_{m,k})\ge m\bigl(L-\dd^*(A)\bigr).$
\end{proof}

\begin{corollary}\label{cor:any-seed}
Suppose $\delta(A)>0$.
If $k$ is the support size of a zero-sum-free sequence
of length $\dd(A)$ over $A$, then there are groups of
rank $k$ with arbitrarily large Davenport excess.
More precisely, the groups in Theorem~\ref{thm:lifting}
satisfy $\delta(A_{m,k})\ge m\delta(A)$.
\end{corollary}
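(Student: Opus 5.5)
The plan is to apply Theorem~\ref{thm:lifting} directly, with $T$ chosen as a longest zero-sum-free sequence over $A$. Since $\delta(A)>0$, the group $A$ is nontrivial. Write $A=C_{n_1}\oplus\cdots\oplus C_{n_s}$ with $1<n_1\mid\cdots\mid n_s$. Fix a zero-sum-free sequence $T$ over $A$ with $|T|=L=\dd(A)$, and let $\supp(T)=\{g_1,\ldots,g_k\}$ consist of distinct elements. Then
$$
T=\prod_{i=1}^k g_i^{t_i}\qquad\text{with every } t_i\ge1 .
$$

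The one hypothesis of Theorem~\ref{thm:lifting} that needs checking is that $g_1,\ldots,g_k$ generate $A$. I will get this from maximality of $T$. Suppose $H=\langle\supp(T)\rangle$ were a proper subgroup of $A$, and pick $g\in A\setminus H$. Any nonempty zero-sum subsequence of $Tg$ must contain $g$, since $T$ is zero-sum free. So it has the form $gT'$ with $T'\mid T$, and then $g=-\sigma(T')\in H$, which is a contradiction. Hence $Tg$ is zero-sum free of length $\dd(A)+1$, contradicting the definition of $\dd(A)$. Therefore $A=\langle g_1,\ldots,g_k\rangle$.

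With this in place, Theorem~\ref{thm:lifting} gives, for each integer $m\ge2$, a group $A_{m,k}$ of rank exactly $k$ satisfying
$$
\delta(A_{m,k})\ge m\bigl(L-\dd^*(A)\bigr)=m\bigl(\dd(A)-\dd^*(A)\bigr)=m\,\delta(A).
$$
Because $\delta(A)\ge1$, this lower bound tends to infinity as $m\to\infty$, while the rank stays fixed at $k$. This gives the stated family of rank-$k$ groups with arbitrarily large Davenport excess.

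The only step with real content is the generation argument. Everything else is a direct substitution into Theorem~\ref{thm:lifting}, where the rank claim $\rr(A_{m,k})=k$ is already established.
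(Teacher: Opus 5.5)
Your proof is correct and follows the paper's argument: maximality of $T$ forces $\supp(T)$ to generate $A$, and then Theorem~\ref{thm:lifting} with $L=\dd(A)$ gives $\delta(A_{m,k})\ge m\,\delta(A)$. The only difference is cosmetic: the paper proves the slightly stronger fact $\Sigma_0(T)=A$ by appending $-a$ for a missing $a$, whereas you append an element outside $\langle\supp(T)\rangle$ directly.
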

\begin{proof}
Let $T$ be such a maximum-length sequence.
Then $\Sigma_0(T)=A$: if $a\in A$ were missing,
the sequence $T(-a)$ would be zero-sum free and longer
than $T$.
Hence $\supp(T)$ generates $A$.
Apply Theorem~\ref{thm:lifting} to the distinct terms
in this support and let $m$ tend to infinity.
\end{proof}

\begin{remark}\label{rem:seed}
Setting $m=1$ in the construction in the proof of
Theorem~\ref{thm:main} gives a generating zero-sum-free
sequence of length
$10$ over $A=C_2^4\oplus C_6$.
It has eight distinct terms, while $\dd^*(A)=9$.
Theorem~\ref{thm:lifting}, with $s=5$ and $k=8$,
therefore gives
$$
 A_{m,8}=C_m^3\oplus C_{2m}^4\oplus C_{6m},
 \qquad \dd(A_{m,8})\ge18m-8.
$$
For $r>8$, append $r-8$ independent cyclic factors of
order $m$, with $m-1$ copies of a generator in each.
This gives the groups and sequences in
Theorem~\ref{thm:main} for every $r\ge8$.
\end{remark}

\begin{remark}[The coprime case]\label{rem:coprime}
If $\gcd(m,\exp(A))=1$, the construction has a direct-sum
description.
Let $f_1,\ldots,f_k$ be a basis of $C_m^k$ and put
$x_i=(g_i,f_i)\in A\oplus C_m^k$.
The sequence in \eqref{eq:lifted-sequence} is zero-sum
free: the $C_m^k$-coordinate forces each selected
multiplicity to be $ma_i$ with $0\le a_i\le t_i$,
and the $A$-coordinate gives
$m\sum_i a_i g_i=0$.
Multiplication by $m$ is an automorphism of $A$,
so $\sum_i a_i g_i=0$ and all $a_i=0$.
Moreover,
$A\oplus C_m^k\cong A_{m,k}$.
The lattice construction extends this argument to
arbitrary $m$, including multiples of primes dividing
$|A|$.
\end{remark}

\section{Disproof of Conjecture~\ref{conj:narkiewicz-sliwa}}\label{sec:narkiewicz}

The Narkiewicz constants arise from counting algebraic integers
with bounded numbers of factorizations.
Let $K$ be an algebraic number field with ring of integers
$\mathcal O_K$ and nontrivial ideal class group $G$.
If $F_1(x)$ counts the nonzero principal ideals $a\mathcal O_K$
of norm at most $x$ whose generators have unique factorization
into irreducibles, up to order and units, then $\Nark_1(G)$
occurs as the exponent of $\log\log x$ in its asymptotic growth;
see \cite[Introduction]{GGW11}.

Narkiewicz \cite{N79} gave combinatorial descriptions of these
invariants in 1979. In 1982, Narkiewicz and \'{S}liwa \cite{NS82}
proposed the formula in Conjecture~\ref{conj:narkiewicz-sliwa}.
Subsequent work includes \cite{Gao97,GGW11}; see also
\cite[Sections~6.2 and~9.3]{GHK06} for arithmetic applications.
Gao, Li, and Peng \cite[Theorem~1.2]{GLP11} proved the formula
for $C_p\oplus C_p$ for every prime $p$, and Gao, Peng, and
Zhong \cite[Theorem~2.3]{GPZ13} proved it for all rank-two groups.
More recent work treats generalized constants, further
higher-rank cases, and related inverse problems
\cite{GHLLQZ24,FHZ24,FZ25}.

Every minimal zero-sum sequence has a unique partition into
minimal zero-sum subsequences, consisting of a single part.
Consequently,
$$
\DD(G)\le\Nark_1(G);
$$
see \cite[Section~2]{GPZ13}.
Since $\sum_i n_i=\dd^*(G)+\rr(G)$, the conjectured formula
would imply Conjecture~\ref{conj:upper-bound}.
Our construction gives the following quantitative failure.

\begin{corollary}[Disproof of the Narkiewicz--\'{S}liwa conjecture]
\label{cor:narkiewicz}
For $r\ge8$ and $m\ge2$, the group $G_{m,r}$ satisfies
$\Nark_1(G_{m,r})-(r+9)m\ge m-r+1$.
In particular, the Narkiewicz--\'{S}liwa conjecture fails
for $G_{m,r}$ whenever $m\ge r$. Moreover, the difference
between $\Nark_1(G)$ and its conjectured value
$\dd^*(G)+\rr(G)$ is unbounded at every fixed rank
$r\ge8$.
\end{corollary}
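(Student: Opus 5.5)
The plan is to chain the inequality $\DD(G)\le\Nark_1(G)$, recorded just before the statement, with the lower bound of Theorem~\ref{thm:main}, and then compare with the conjectured value. Since $\DD(G)=\dd(G)+1$ by \eqref{eq:small-large}, Theorem~\ref{thm:main} gives
$$
\Nark_1(G_{m,r})\ge\DD(G_{m,r})=\dd(G_{m,r})+1\ge (r+10)m-r+1 .
$$
Subtracting $(r+9)m$ yields $\Nark_1(G_{m,r})-(r+9)m\ge m-r+1$, which is the first claim.

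For the comparison with the conjectured value, I will first compute $\sum_i n_i$ for $G_{m,r}=C_m^{r-5}\oplus C_{2m}^4\oplus C_{6m}$. Since $m\ge2$, this is a valid invariant-factor decomposition with $m\mid 2m\mid 6m$ and all factors nontrivial. Hence
$$
\sum_i n_i=(r-5)m+8m+6m=(r+9)m,
$$
which also equals $\dd^*(G_{m,r})+\rr(G_{m,r})=(r+9)m-r+r$. Therefore $\Nark_1(G_{m,r})-\sum_i n_i\ge m-r+1$. This is positive exactly when $m\ge r$, so the conjectured equality fails for those $m$.

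For unboundedness at a fixed rank $r\ge8$, I fix $r$ and let $m\to\infty$. Each group $G_{m,r}$ has rank $r$ by Theorem~\ref{thm:main}, and the difference $\Nark_1(G_{m,r})-(\dd^*+\rr)(G_{m,r})$ is at least $m-r+1$, which tends to infinity. No step is a genuine obstacle. The only point needing care is that the conjectured value must be computed from the invariant-factor decomposition, which requires checking the divisibility chain and $m\ge2$ so that the rank is exactly $r$.
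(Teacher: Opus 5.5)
Your proposal is correct and matches the paper's proof: you combine $\DD(G)\le\Nark_1(G)$ with the bound $\dd(G_{m,r})\ge(r+10)m-r$ from Theorem~\ref{thm:main}, and you use the invariant-factor sum $\sum_i n_i=(r+9)m$. The unboundedness at fixed rank then follows by letting $m\to\infty$, as in your argument.
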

\begin{proof}
The sum of the invariant factors of $G_{m,r}$ is
$(r-5)m+4(2m)+6m=(r+9)m$.
The assertion follows from $\Nark_1(G)\ge\DD(G)$ and
Theorem~\ref{thm:main}.
\end{proof}

\section{Disproof of Conjecture~\ref{conj:girard}}\label{sec:cross}

Krause \cite{Krause84} introduced the cross number in the study
of algebraic number fields. Krause and Zahlten \cite{KZ91}
developed it further in the arithmetic of Krull monoids and
their divisor class groups; see also \cite[Chapter~5]{GHK06}.
General bounds and asymptotic results were obtained by Girard
\cite{Girard09}, and further extremal cases were established
by He \cite{He14} and Kim \cite{Kim15}. Bashir and Schmid
\cite{BS25} studied the sets of attainable cross numbers,
including their arithmetic progressions and gaps.

Girard proposed Conjecture~\ref{conj:girard} in
\cite[Conjecture~1.2]{Girard08}; see also
\cite[Conjecture~1]{Girard10}.
It would imply $\DD(G)\le\rr(G)\exp(G)$ and the exact formula
$\DD(C_n^r)=r(n-1)+1$
\cite[Propositions~2.1 and~2.2]{Girard08}.
It holds for cyclic groups, $p$-groups, and rank-two groups
\cite[Proposition~2.3 and Theorem~2.4]{Girard08}.
Examples of Geroldinger, Liebmann, and Philipp
\cite[Corollary~3.2 and Remark~3.3]{GLP12} show that its
inequality can fail at length $\dd^*(G)-1$.
We obtain unbounded cross numbers at the stated threshold,
even when the rank is fixed.

\begin{theorem}\label{thm:cross}
For every fixed integer $r\ge9$ and every real number
$M>0$, there exist a finite abelian group $G$ of rank $r$
and a zero-sum-free sequence $S$ over $G$ such that
$|S|=\dd^*(G)$ and $\kk(S)>M$.
In particular, Conjecture~\ref{conj:girard} is false.
\end{theorem}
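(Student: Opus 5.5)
The plan is to combine a construction of Theorem~\ref{thm:main} type with one extra cyclic factor. The extra factor absorbs the length constraint $|S|=\dd^*(G)$, while the Davenport excess is spent on replacing terms of large order by terms of small order. A first heuristic rules out the naive approach. Take $H=G_{m,r-1}$, $G=H\oplus C_N$ with $6m\mid N$, and $S=S_{m,r-1}\,f^{N-1-m}$. This has the right length and is zero-sum free, but its cross number stays bounded: every term of $S_{m,r-1}$ has order at least $m$ and there are about $(r+9)m$ of them, and the $f$-part contributes less than one. So I will not use $S_{m,r-1}$ as it stands. The cross number must come from many terms of \emph{small} order, spread over many different scales.

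\textbf{Reason for multiple scales.} Consider the terms whose order lies in a dyadic range $[q,2q)$. They generate a subgroup of exponent less than $2q$ and rank at most $r$. Any general bound of the form $\dd\ll\rr\cdot\exp$ then limits their contribution to $\kk(S)$ to $O(r)$. Hence an unbounded cross number at fixed rank needs about $\log M$ essentially independent scales, each contributing a positive amount. The excess is what pays for this. For each scale I replace a stretch of high-order basis-type terms by lower-order terms, and the excess $m$ at that scale compensates for the resulting loss of length.

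\textbf{Concrete construction.} I would choose $m=2^K$ (or $m=q^K$) and let the extra generator $f$ have order $N$, a large multiple of $6m$, so that $G=G_{m,r-1}\oplus C_N$ has rank $r$ and the invariant factors stay in a divisibility chain. The sequence $S$ would be built as a product over levels $j=1,\dots,K$. At level $j$ there is a copy of the seed sequence of Remark~\ref{rem:seed} (length $10$ over $C_2^4\oplus C_6$), or of its lifted form from Theorem~\ref{thm:lifting}. This copy sits inside the elements of order dividing $6\cdot2^j$, and each level is twisted by a suitable multiple of $f$ so that different levels cannot combine to a zero-sum. The remaining length is filled with copies of $f$, so that exactly $|S|=\dd^*(G)$. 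Each level has cross number bounded below by an absolute constant, and the number of levels is $K\to\infty$, so $\kk(S)\to\infty$.

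\textbf{Verifying zero-sum freeness.} I would argue as in the proof of Theorem~\ref{thm:main}, by a coordinate-by-coordinate divisibility analysis. Suppose $T\mid S$ has sum zero. I would first use the $C_N$-coordinate to show that $T$ selects multiplicities from the levels in a rigid, triangular pattern. Then I would descend level by level: at each level, the multiplicities are forced into the binary pattern of Lemma~\ref{lem:binary}, and the $e$-type coordinate gives the parity obstruction $c'+2h\equiv0\pmod 6$, which forces $h=0$. Finally I would check that the length bookkeeping gives exactly $\dd^*(G)$ once the number of $f$-terms is adjusted.

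\textbf{Main obstacle.} I expect the hardest step to be the cross-level interaction in the zero-sum-freeness proof. The separating twists by multiples of $f$ have to be strong enough to decouple the levels, yet cheap enough in length that the total stays at most $\dd^*(G)$. If the multi-level twist proves too delicate, the fallback is to use the lifting theorem with a seed of unbounded excess-to-exponent ratio at fixed rank, taking $A=G_{m,8}$ itself as the seed and iterating Theorem~\ref{thm:lifting}. The goal of that iteration is to generate the needed scales inside rank $r-1$, with the extra cyclic factor again serving only to fix the length.
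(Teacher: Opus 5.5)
There is a genuine gap: the proposal never constructs the sequence. The multi-level sequence is specified only up to ``a suitable multiple of $f$''. There are no explicit twists, no length count and no proof of zero-sum freeness, and you leave the cross-level interaction open yourself. That difficulty is real, for three reasons:
\begin{enumerate}
\item Twisting a level by an element of $\langle f\rangle$ of large order destroys its $\Theta(1)$ contribution to $\kk$.
\item Twists of order dividing $6\cdot2^j$ lie in a small cyclic subgroup and give no evident separation between levels.
\item The roughly $N$ filler copies of $f$ force every subsequence of the levels with vanishing $G_{m,r-1}$-component to have its $f$-component in a short window.
\end{enumerate}
Your fallback cannot work at all. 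In Theorem~\ref{thm:lifting} one has $\ord(x_i)=m\ord(g_i)$ with multiplicity $m(t_i+1)-1$. Hence $\kk(T)+\sum_i 1/\ord(g_i)=\sum_i (t_i+1)/\ord(g_i)$ is invariant under lifting and bounds the cross number of every iterate. For the seed of Remark~\ref{rem:seed} this quantity equals $3$, and a cyclic factor filled with copies of a generator adds less than $1$, as your own heuristic shows. The motivating heuristic is also false: terms with orders in $[q,2q)$ need not generate a subgroup of exponent below $2q$, since orders $q$ and $q+1$ already give exponent $q(q+1)$. This error is what hides the simple construction below.

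The missing idea is that the extra cyclic factor should carry the cross number, while the Davenport excess only pays for its length deficit. Take $m$ squarefree and give the extra factor order $m$ itself, so that $B_m\oplus C_m=C_m^4\oplus C_{2m}^4\oplus C_{6m}$ is still in invariant-factor form. Over it take $R_m=\prod_{p\mid m}\bigl((m/p)f\bigr)^{p-1}$. This sequence has three properties:
\begin{enumerate}
\item It is zero-sum free, by projecting $\langle f\rangle$ onto each $\ZZ/p\ZZ$.
\item It has length $\ell(m)=\sum_{p\mid m}(p-1)\le m-1$.
\item Its cross number $\sum_{p\mid m}(1-1/p)$ is unbounded as $m$ runs through primorials.
\end{enumerate}
Its deficit $m-1-\ell(m)$ is less than the excess $|S_m|-\dd^*(B_m)=m$. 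So delete $\ell(m)+1$ copies of $e$ from $S_m$ and append $R_m$; this gives length exactly $\dd^*(B_m\oplus C_m)=18m-9$. Zero-sum freeness follows by projecting onto the two direct summands, as in Lemma~\ref{lem:cross-transfer}, so no interaction between levels ever arises. Padding with further factors $C_m$ carrying $f_i^{m-1}$ handles $r>9$. This also shows why a large $C_N$ with $6m\mid N$ is the wrong normalization. Over $C_N$, a zero-sum-free sequence with cross number at least $1$ falls short of length $N-1$ by an amount of order $N$, far more than the excess $m$ supplied by $S_{m,r-1}$.
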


We begin with a lemma that combines sequences over two direct
summands, using excess length in one to compensate for a
length deficit in the other.

\begin{lemma}\label{lem:cross-transfer}
Let $H= C_{h_1}\oplus\cdots\oplus C_{h_t}$, where
$1<h_1\mid\cdots\mid h_t$, and let $n\ge2$ and $s\ge1$
be integers with $n\mid h_1$. Suppose that $S$ is a
zero-sum-free sequence over $H$ with
$|S|=\dd^*(H)+\varepsilon$, and that $R$ is a zero-sum-free
sequence over $C_n^s$ with length $L\le s(n-1)$.
If $\varepsilon\ge s(n-1)-L$, then $G=H\oplus C_n^s$
admits a zero-sum-free sequence $T$ satisfying
$$
 |T|=\dd^*(G),\qquad \kk(T)\ge\kk(R).
$$
If, in addition, every term of $S$ has order $N$, then
$T$ can be chosen so that
$$
 \kk(T)=\kk(R)+\frac{\dd^*(H)+s(n-1)-L}{N}.
$$
\end{lemma}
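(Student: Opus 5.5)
The plan is to take $T$ as a concatenation $T=S'R$, where $S'\mid S$ is obtained by deleting some terms of $S$ and $R$ is viewed inside the summand $C_n^s$ of $G=H\oplus C_n^s$. First I record the invariant-factor bookkeeping. Since $n\mid h_1\mid\cdots\mid h_t$, the decomposition $G\cong C_n^s\oplus C_{h_1}\oplus\cdots\oplus C_{h_t}$ is already a divisibility chain, so it is the invariant-factor decomposition of $G$. Hence
$$
\dd^*(G)=s(n-1)+\dd^*(H).
$$
It is worth stating this explicitly, since $\dd^*$ is defined through the chosen invariant-factor decomposition.

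The length target is $\dd^*(G)-L=\dd^*(H)+s(n-1)-L$. This number lies between $\dd^*(H)$ and $\dd^*(H)+\varepsilon=|S|$, because $L\le s(n-1)$ and $\varepsilon\ge s(n-1)-L$. So I choose $S'\mid S$ with $|S'|=\dd^*(H)+s(n-1)-L$, which amounts to deleting $\varepsilon-(s(n-1)-L)\ge0$ terms. Every subsequence of a zero-sum-free sequence is zero-sum free, so $S'$ is zero-sum free over $H$.

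Next I check that $T=S'R$ is zero-sum free over $G$. Write a zero-sum subsequence as $UV$ with $U\mid S'$ and $V\mid R$. Projecting onto $H$ gives $\sigma(U)=0$, so $U$ is empty. Projecting onto $C_n^s$ then gives $\sigma(V)=0$, so $V$ is empty. The length is $|T|=|S'|+L=\dd^*(G)$, as required.

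For the cross number, orders in a direct summand are preserved under the embedding, so the cross number is additive:
$$
\kk(T)=\kk(R)+\kk(S')\ge\kk(R).
$$
This uses that $S$ is zero-sum free, so it contains no zero terms and $\kk(S')$ is defined and nonnegative. If every term of $S$ has order $N$, then $\kk(S')=|S'|/N=\bigl(\dd^*(H)+s(n-1)-L\bigr)/N$, which gives the exact formula. There is no real obstacle here. The only delicate points are confirming the invariant-factor decomposition of $G$, which needs $n\mid h_1$, and confirming that the number of deleted terms is nonnegative.
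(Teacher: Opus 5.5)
Your proposal is correct and follows essentially the same route as the paper. In both, you delete $\varepsilon+L-s(n-1)\ge0$ terms from $S$, concatenate with $R$, use the two coordinate projections to get zero-sum freeness, and read off length and cross number by additivity; your explicit check that $n\mid h_1$ makes $C_n^s\oplus C_{h_1}\oplus\cdots\oplus C_{h_t}$ the invariant-factor decomposition, so that $\dd^*(G)=\dd^*(H)+s(n-1)$, spells out what the paper compresses into ``the invariant factors give''.
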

\begin{proof}
The invariant factors give
$\dd^*(G)=\dd^*(H)+s(n-1)$. Put
$q=\varepsilon+L-s(n-1)$. The hypotheses imply
$0\le q\le\varepsilon\le|S|$, so we may delete $q$
terms from $S$ to obtain a subsequence $S'$ of length
$\dd^*(H)+s(n-1)-L$. Identify $H$ and $C_n^s$ with
the corresponding subgroups of $G$, and consider the
epimorphisms $\pi_H:G\longrightarrow H$ and
$\pi_C:G\longrightarrow C_n^s$ given by
$$
 \pi_H(h,x)=h,\qquad \pi_C(h,x)=x.
$$
Put $T=S'R$. Any zero-sum subsequence of $T$ has the
form $UV$, where $U\mid S'$ and $V\mid R$. Applying
$\pi_H$ and $\pi_C$ to its sum gives $\sigma(U)=0$
and $\sigma(V)=0$, respectively. Thus both $U$ and $V$
are empty, proving that $T$ is zero-sum free.
Its length is $\dd^*(G)$. Since
$\kk(T)=\kk(S')+\kk(R)$, the inequality follows.
When every term of $S$ has order $N$, we also have
$\kk(S')=|S'|/N$, giving the last formula.
\end{proof}

The condition $\varepsilon\ge s(n-1)-L$ compares the
surplus in $S$ with the deficit in $R$. In particular,
when $s=1$ and $\varepsilon\ge n-1$, every zero-sum-free
sequence over the added cyclic factor can be used.

We use the sequence $S_m$ over $B_m$ from
\eqref{eq:explicit-sequence}. Each $q_j$ has a
$y$-coordinate of order $2m$ and an $e$-coordinate of
order $3m$, so $\ord(q_j)=6m$. Since $\ord(e)=6m$,
every term of $S_m$ has order $6m$, and hence
$$
 \kk(S_m)=\frac{18m-8}{6m}=3-\frac4{3m}.
$$

\begin{proof}[Proof of Theorem~\ref{thm:cross}]
We first treat rank nine. Let $m\ge2$ be squarefree, and put
$$
 \ell(m)=\sum_{p\mid m}(p-1),\qquad
 \kappa(m)=\sum_{p\mid m}\left(1-\frac1p\right),
$$
where the sums range over primes. The inequality
$(a-1)+(b-1)\le ab-1$ for positive integers $a,b$ gives
$\ell(m)\le m-1$ by induction.

Consider
$$
 G_{m,9}=B_m\oplus\langle f\rangle
        =C_m^4\oplus C_{2m}^4\oplus C_{6m},
 \qquad \ord(f)=m.
$$
Its invariant factors give $\dd^*(G_{m,9})=18m-9$.
For each prime $p\mid m$, set $a_p=(m/p)f$. These
elements have order $p$ and generate independent primary
subgroups of $\langle f\rangle$. Consequently, the sequence
$$
 R_m=\prod_{p\mid m}a_p^{p-1}
$$
is zero-sum free. To see this, for each prime $p\mid m$
use the epimorphism
$$
 \rho_p:\langle f\rangle\longrightarrow\ZZ/p\ZZ,\qquad
 \rho_p(tf)=t+p\ZZ.
$$
It sends $a_q$ to zero for every prime $q\mid m$ with
$q\ne p$, whereas $\rho_p(a_p)=m/p+p\ZZ$ is nonzero
because $m$ is squarefree. If $\prod_{p\mid m}a_p^{c_p}$
has sum zero, with $0\le c_p\le p-1$, applying $\rho_p$
gives $c_p(m/p)\equiv0\pmod p$, hence $c_p=0$.
Thus every zero-sum subsequence is empty. Also,
$$
 |R_m|=\ell(m),\qquad \kk(R_m)=\kappa(m).
$$

We have $|S_m|-\dd^*(B_m)=m$. Thus
Lemma~\ref{lem:cross-transfer} applies with $H=B_m$,
$n=m$, $s=1$, $\varepsilon=m$, and $L=\ell(m)$.
It prescribes deleting $\ell(m)+1$ terms from $S_m$.
Choosing these terms to be copies of $e$ gives
$$
 T_m=e^{\,4m-\ell(m)-2}\prod_{j=1}^7q_j^{\,2m-1}R_m.
$$
The deletion is possible since $\ell(m)+1\le m$ and
$S_m$ contains $4m-1$ copies of $e$. The lemma proves
that $T_m$ is zero-sum free, and its length is
$$
 |T_m|=(18m-8)-(\ell(m)+1)+\ell(m)
       =18m-9=\dd^*(G_{m,9}).
$$
Since the deleted terms all have order $6m$, its cross
number is
\begin{equation}\label{eq:cross-number}
 \kk(T_m)=3+\kappa(m)-\frac{\ell(m)+9}{6m}.
\end{equation}

Let $m$ be the product of the first $t$ primes.
Then $\kappa(m)\ge t/2$. By Lemma~\ref{lem:cross-transfer},
$\kk(T_m)\ge\kk(R_m)=\kappa(m)$, so these cross numbers
tend to infinity with $t$. This proves the assertion
for rank nine.

For $r>9$, append $r-9$ independent cyclic factors of
order $m$ and, in each factor, the sequence consisting
of $m-1$ copies of a generator. The resulting group is
$G_{m,r}$. The sequence remains zero-sum free, and its
length and $\dd^*(G_{m,r})$ both increase by
$(r-9)(m-1)$. Its cross number increases by
$(r-9)(1-1/m)$, so it is still unbounded as $t$ tends
to infinity. Finally, the bound proposed in
Conjecture~\ref{conj:girard} is less than $r$, proving
that the conjecture is false.
\end{proof}

\begin{corollary}\label{cor:cross-from-seed}
Let $A=\langle g_1,\ldots,g_k\rangle$ be a nontrivial
finite abelian group, where $g_1,\ldots,g_k$ are distinct.
Suppose that $\prod_{i=1}^k g_i^{t_i}$ is zero-sum free,
where $t_i\in\NN_0$, and has length greater than $\dd^*(A)$.
Then, for every fixed $r\ge k+1$ and every real number
$M>0$, there exist a group $G$ of rank $r$ and a zero-sum-free
sequence $U$ over $G$ with $|U|=\dd^*(G)$ and $\kk(U)>M$.
\end{corollary}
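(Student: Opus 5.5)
The plan mirrors the proof of Theorem~\ref{thm:cross}, with the explicit seed $S_m$ replaced by the lifted sequence of Theorem~\ref{thm:lifting}.

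\textbf{Step 1: the lifted sequence.} Let $L$ be the length of the given sequence, so $L\ge\dd^*(A)+1$. For an integer $m\ge2$, Theorem~\ref{thm:lifting} gives a zero-sum-free sequence $U_m$ over $B=A_{m,k}$, a group of rank $k$. We have
$$
|U_m|=m(L+k)-k,\qquad \dd^*(A_{m,k})=m(\dd^*(A)+k)-k.
$$
Hence $\varepsilon:=|U_m|-\dd^*(A_{m,k})=m(L-\dd^*(A))\ge m$.

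\textbf{Step 2: rank $k+1$.} Take $m$ squarefree with $m\ge2$, and let $R_m=\prod_{p\mid m}a_p^{p-1}$ be the sequence over $C_m$ from the proof of Theorem~\ref{thm:cross}. It is zero-sum free, and
$$
|R_m|=\ell(m)\le m-1,\qquad \kk(R_m)=\kappa(m).
$$
Apply Lemma~\ref{lem:cross-transfer} with $H=A_{m,k}$, $n=m$, $s=1$, $S=U_m$ and $R=R_m$. Two hypotheses need checking.
\begin{itemize}
\item The divisibility $n\mid h_1$ holds: every invariant factor of $A_{m,k}$ is a multiple of $m$. This is either $m$ itself, in the $C_m^{k-s}$ part, or $mn_j$.
\item The surplus condition holds: $\varepsilon\ge m>m-1-\ell(m)$.
\end{itemize}
The lemma then gives a zero-sum-free sequence $T$ over $G=A_{m,k}\oplus C_m$ with $|T|=\dd^*(G)$ and $\kk(T)\ge\kappa(m)$.

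\textbf{Step 3: the rank of $G$.} The group $G$ has invariant factors $m$ taken $k-s+1$ times, followed by $mn_1,\ldots,mn_s$. This is still a divisibility chain with all factors nontrivial, so $\rr(G)=k+1$. Now let $m$ be the product of the first $t$ primes. Then $\kappa(m)\ge t/2$, which exceeds $M$ for large $t$.

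\textbf{Step 4: ranks $r>k+1$.} Append $r-k-1$ further cyclic factors of order $m$, each carrying $m-1$ copies of a generator, exactly as at the end of the proof of Theorem~\ref{thm:cross}.
\begin{itemize}
\item Zero-sum freeness is preserved, by projecting onto the direct summands.
\item The invariant factors still form a chain, since $m$ divides everything. Both $|T|$ and $\dd^*$ increase by $(r-k-1)(m-1)$, so length $\dd^*(G)$ is preserved.
\item The cross number only increases.
\end{itemize}
This yields groups of every rank $r\ge k+1$.

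\textbf{Main obstacle.} The argument is mostly bookkeeping. The only point needing care is the invariant-factor structure of $A_{m,k}\oplus C_m^{r-k}$, which guarantees both that the rank is exactly $r$ and that $\dd^*$ is additive. This relies on the extra factors of order $m$ dividing all the factors $mn_j$, and it is why the same $m$ must be used in the lifting and in the cyclic padding.
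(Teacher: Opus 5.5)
Your proposal is correct and follows the paper's proof essentially step for step. You lift via Theorem~\ref{thm:lifting} to get a surplus $m(L-\dd^*(A))\ge m$ over $A_{m,k}$, attach $R_m$ on an extra factor $C_m$ via Lemma~\ref{lem:cross-transfer}, let $m$ run through products of successive primes, and pad with further $C_m$ factors for higher rank; your explicit checks that $m$ divides every invariant factor and that the rank is exactly $r$ are points the paper leaves implicit.
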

\begin{proof}
Write the length of the given sequence as $\dd^*(A)+\Delta$,
where $\Delta\ge1$. For each squarefree $m\ge2$,
Theorem~\ref{thm:lifting} supplies a zero-sum-free sequence
over $A_{m,k}$ with length $\dd^*(A_{m,k})+m\Delta$.
This group has rank $k$, and all its invariant factors
are divisible by $m$. Apply Lemma~\ref{lem:cross-transfer}
with $H=A_{m,k}$, $n=m$, $s=1$, and the sequence $R_m$
from the proof of Theorem~\ref{thm:cross}. Since
$m\Delta\ge m>m-1-\ell(m)$, it gives a sequence over
$A_{m,k}\oplus C_m$ of length $\dd^*(A_{m,k}\oplus C_m)$
and cross number at least $\kappa(m)$. Letting $m$ run
through products of successive primes proves the assertion
for rank $k+1$. Appending cyclic factors of order $m$
and $m-1$ copies of a generator in each proves it for
every larger fixed rank.
\end{proof}

The explicit sequences also give a stronger statement
when the length is allowed to exceed $\dd^*(G)$.

\begin{corollary}\label{cor:cross-threshold}
For every fixed integer $r\ge9$ and every pair of real
numbers $A,M\ge0$, there exist a finite abelian group $G$
of rank $r$ and a zero-sum-free sequence $U$ over $G$
such that $|U|\ge\dd^*(G)+A$ and $\kk(U)>M$.
\end{corollary}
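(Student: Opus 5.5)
We modify the proof of Theorem~\ref{thm:cross} so that fewer terms are deleted from $S_m$. At rank nine, work in $G_{m,9}=B_m\oplus\langle f\rangle$ with $m$ squarefree. Instead of applying Lemma~\ref{lem:cross-transfer} with a prescribed deletion, we take the full sequence
$$
U_m=S_m R_m,
$$
where $R_m=\prod_{p\mid m}a_p^{p-1}$ is the zero-sum-free sequence over $\langle f\rangle$ constructed in that proof. The projection argument of Lemma~\ref{lem:cross-transfer} applies verbatim: a zero-sum subsequence $UV$ with $U\mid S_m$ and $V\mid R_m$ projects to zero-sum subsequences in $B_m$ and in $\langle f\rangle$. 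Both are therefore empty, so $U_m$ is zero-sum free. Its length is
$$
|U_m|=18m-8+\ell(m)=\dd^*(G_{m,9})+1+\ell(m),
$$
and its cross number is
$$
\kk(U_m)=3-\tfrac{4}{3m}+\kappa(m)\ge\kappa(m).
$$

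Next we choose $m$ as the product of the first $t$ primes, with $t$ large. Then $\kappa(m)\ge t/2$ and $\ell(m)\ge t$, both tending to infinity, so $|U_m|-\dd^*(G_{m,9})$ exceeds $A$ and $\kk(U_m)$ exceeds $M$ for $t$ large enough. Every $m$ of this form is at least $2$ and squarefree, so $G_{m,9}$ has rank nine.

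For $r>9$, we append $r-9$ independent cyclic factors of order $m$, each carrying $m-1$ copies of a generator, exactly as in Theorem~\ref{thm:cross}. Zero-sum freeness is preserved by projection onto the direct summands. The length and $\dd^*$ increase by the same amount $(r-9)(m-1)$, so the surplus $1+\ell(m)$ is unchanged. The cross number only increases. The resulting group is $G_{m,r}$, of rank $r$.

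The only point requiring care is that the surplus must grow without bound; a surplus of merely $m$ would not suffice. I would emphasize that $\ell(m)\ge t$ grows along the chosen sequence, or alternatively that one could keep a surplus of order $m$ by replacing $R_m$ with $f^{m-1}$. The latter loses the cross-number growth, so the version with $R_m$ is the one to use. No step seems a genuine obstacle; this is a routine variant of the previous proof.
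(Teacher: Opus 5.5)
Your proposal is correct and is essentially the paper's own argument: take the untrimmed product $S_mR_m$ over $G_{m,9}$, which has surplus $\ell(m)+1$ and cross number $3-\frac{4}{3m}+\kappa(m)$, let $m$ run through products of the first $t$ primes, and pad with cyclic factors of order $m$ for $r>9$. Your closing aside that ``a surplus of merely $m$ would not suffice'' is mistaken, since $m\to\infty$ as well, but it plays no role in the argument.
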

\begin{proof}
For rank nine, use the untrimmed product $U_m=S_mR_m$
over $G_{m,9}$. It satisfies
$$
 |U_m|-\dd^*(G_{m,9})=\ell(m)+1,\qquad
 \kk(U_m)=3-\frac4{3m}+\kappa(m).
$$
Both quantities tend to infinity as $m$ runs through
the products of successive primes. For $r>9$, append
the cyclic factors and sequences used in the proof of
Theorem~\ref{thm:cross}.
\end{proof}

Consequently, there are no finite-valued functions
$F,H:\NN\longrightarrow[0,\infty)$ such that every
zero-sum-free sequence $S$ with
$|S|\ge\dd^*(G)+F(\rr(G))$ satisfies
$\kk(S)\le H(\rr(G))$.

\begin{remark}
The length condition in Theorem~\ref{thm:cross} is essential
to its conclusion about long sequences. Without this
condition, unbounded cross numbers at fixed rank already
occur in cyclic groups, as the sequences $R_m$ show.
The sequences constructed here are not asserted to have
maximum possible length $\dd(G)$.
Also, Conjecture~\ref{conj:girard} uses invariant factors.
The classical cross-number conjectures
$\kk(G)=\kk^*(G)$ and $\mathsf K(G)=\mathsf K^*(G)$
concern the maximal cross numbers of zero-sum-free and
minimal zero-sum sequences, respectively. Their proposed
values use the prime-power elementary divisors and may
themselves grow without bound at fixed rank
\cite[Section~3.8 and Appendix~B, Problem~5]{GGZ26}.
Our counterexamples do not settle those conjectures.
\end{remark}

\section{Bounds in terms of rank and exponent}\label{sec:rank-exponent}

We first record the answers to Problems~\ref{prob:rank-bound}
and~\ref{prob:liu}, and then compare the lower examples with a
uniform upper bound. Using non-dispersive sequences over
$C_n^t\oplus C_{kn}$, Liu \cite[Corollary~4.9]{Liu20} showed
that the normalized supremum in Problem~\ref{prob:liu} is at
least $1/2$. The fixed-rank construction shows that it is infinite.

\begin{samepage}
\begin{corollary}\label{cor:main}
For every fixed integer $r\ge8$,
$$
 \sup_{\rr(G)=r}\delta(G)=\infty,
$$
where the supremum is taken over all finite abelian
groups $G$ of rank $r$.
Consequently, no function of the rank alone bounds the excess,
and $\sup_G\delta(G)/\rr(G)=\infty$ even when restricted
to groups of rank eight.
Moreover, $G_{m,r}$ violates Conjecture~\ref{conj:upper-bound}
whenever $m\ge r$.
\end{corollary}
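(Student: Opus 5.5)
This corollary follows directly from Theorem~\ref{thm:main}; no new construction is needed. Fix $r\ge8$ and consider the family $G_{m,r}=C_m^{\,r-5}\oplus C_{2m}^{\,4}\oplus C_{6m}$ for $m\ge2$. The first task is to confirm that each $G_{m,r}$ has rank exactly $r$. For $m\ge2$ the invariant factors $m,\dots,m,2m,\dots,2m,6m$ are all greater than $1$ and form a divisibility chain $m\mid 2m\mid 6m$, so the displayed decomposition is the invariant-factor decomposition and it has $r$ nontrivial factors. This fact is already recorded in the proof of Theorem~\ref{thm:main}.

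Theorem~\ref{thm:main} then gives $\delta(G_{m,r})\ge m$. Letting $m\to\infty$ with $r$ fixed yields $\sup_{\rr(G)=r}\delta(G)=\infty$.

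The consequences follow in turn:
\begin{itemize}
\item[(a)] If some function $f$ satisfied $\delta(G)\le f(\rr(G))$, then for $r=8$ we would have $f(8)\ge m$ for every $m\ge2$, which is impossible. This answers Problem~\ref{prob:rank-bound} negatively.
\item[(b)] Restricting to rank eight, $\delta(G_{m,8})/8\ge m/8\to\infty$, so Liu's supremum in Problem~\ref{prob:liu} is infinite even on rank-eight groups.
\item[(c)] Whenever $m\ge r$, we have $\delta(G_{m,r})\ge m\ge r>r-1=\rr(G_{m,r})-1$, which violates Conjecture~\ref{conj:upper-bound}.
\end{itemize}

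There is no genuine obstacle here. The only point requiring care is checking that the rank stays exactly $r$ rather than dropping. This is where the hypothesis $m\ge2$ matters: at $m=1$ the $C_m$ factors would be trivial and the rank would collapse.
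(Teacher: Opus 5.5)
Your proposal is correct and follows the paper's proof: fix $r$ and let $m\to\infty$ in Theorem~\ref{thm:main} to get $\delta(G_{m,r})\ge m$ unbounded at rank $r$, and for $m\ge r$ note that $\delta(G_{m,r})\ge m\ge r>r-1$. Your explicit check that the rank is exactly $r$ when $m\ge2$ is harmless but redundant, since Theorem~\ref{thm:main} already records it.
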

\end{samepage}

\begin{proof}
Fix $r$ and let $m$ tend to infinity in Theorem~\ref{thm:main}.
For the last assertion, $\delta(G_{m,r})\ge m\ge r$
contradicts the proposed upper bound $r-1$.
\end{proof}

Dependence on the product of rank and exponent is necessary
up to an absolute constant. For
$m\ge2$ and $t\ge1$, put
$$
 J_{m,t}=G_{m,8}^{\oplus t}
        =C_m^{3t}\oplus C_{2m}^{4t}\oplus C_{6m}^{t}.
$$
Placing a copy of $S_m$ in each direct summand and taking
their product gives a zero-sum-free sequence of length
$t(18m-8)$. Since $\dd^*(J_{m,t})=t(17m-8)$, we obtain
$$
 \delta(J_{m,t})\ge tm
   =\frac{\rr(J_{m,t})\exp(J_{m,t})}{48}.
$$
In particular, setting $t=m$ shows that no absolute
constant $C$ can give the stronger additive bound
$\delta(G)\le C(\rr(G)+\exp(G))$.
Fixing $t$ and letting $m$ tend to infinity rules out a
bound depending only on the rank. Likewise, fixing $m$
and letting $t$ tend to infinity rules out a bound
depending only on the exponent.
Theorem~\ref{thm:rank-exponent} below supplies the corresponding
uniform upper bound.

For fixed $r\ge8$, the same groups satisfy
$\exp(G_{m,r})=6m$ and
$\delta(G_{m,r})/\exp(G_{m,r})\ge1/6$.
Thus excess can grow at least linearly in the exponent while
rank remains fixed.

To place the uniform upper bound in context, put $r=\rr(G)$
and $n=\exp(G)$. The classical order--exponent estimate,
proved by van Emde Boas and Kruyswijk
\cite[Theorem~7.1]{vEBK69} and rediscovered by Meshulam
\cite[Theorem~1]{Mesh90}, is
\begin{equation}\label{eq:classical-upper}
\DD(G)\le n\left(1+\log\frac{|G|}{n}\right).
\end{equation}
It refines the coarse scale $O(n\log|G|)$ by using the
quotient $|G|/n$. Since $|G|\le n^r$, it gives
$\DD(G)\le n(1+(r-1)\log n)$; see also
\cite[Theorem~3.4.16]{GGZ26} and
\cite[Theorem~5.5.5]{GHK06}.
Theorem~\ref{thm:rank-exponent} removes the logarithmic factor
from this estimate with the absolute constant $16/5$, simultaneously
for all ranks and exponents. Combining the two bounds gives
\begin{equation}\label{eq:combined-upper}
\DD(G)\le n\min\left\{1+\log\frac{|G|}{n},\,\frac{16}{5}r\right\}.
\end{equation}
The new term is smaller precisely when
$\log(|G|/n)>16r/5-1$; the classical bound remains sharper,
for example, for cyclic groups. For $G=C_n^r$ with $r\ge2$,
the ratio of the classical right-hand side to $\frac{16}{5}rn$ is
$$
\frac{5\bigl(1+(r-1)\log n\bigr)}{16r},
$$
which tends to infinity with $n$. The comparison is therefore
an improvement by an unbounded factor over the classical
estimate, while retaining an explicit constant valid for all
$r$ and $n$.

Other estimates are sharper in particular regimes.
Bhowmik and Schlage-Puchta \cite[Theorem~1.1]{BSP12} proved
$\DD(G)\le\exp(G)+|G|/\exp(G)-1$ when
$\exp(G)\ge\sqrt{|G|}$.
Girard \cite[Theorem~1]{Girard18} proved
$\DD(C_n^r)\sim rn$ as $n\to\infty$ for every fixed $r$;
see also \cite[Theorem~3.7.15]{GGZ26}.
This asymptotic formula is stronger for each fixed rank as
$n\to\infty$, but its fixed-rank formulation does not by itself
provide a single constant valid for all ranks and exponents.
The contribution of Theorem~\ref{thm:rank-exponent} is this
uniformity, together with linear dependence on the rank.

A bound $\delta(G)\le C\rr(G)\exp(G)$ with absolute $C$
is equivalent to an estimate $\DD(C_n^r)\le K rn$
with absolute $K$. Indeed, the former gives the latter with
$K=C+1$, because $\dd^*(C_n^r)+1\le rn$.
Conversely, a group of rank $r$ and exponent $n$ embeds in
$C_n^r$, so
$\delta(G)\le\DD(G)\le\DD(C_n^r)\le K rn$.
The following theorem answers this question
affirmatively.

\begin{theorem}\label{thm:rank-exponent}
For every nontrivial finite abelian group $G$,
$$
\DD(G)\le\frac{16}{5}\rr(G)\exp(G).
$$
Consequently, $\delta(G)<\frac{16}{5}\rr(G)\exp(G)$.
\end{theorem}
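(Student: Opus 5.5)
The plan is to reduce to the homocyclic group $C_n^r$ and then run a van Emde Boas--Kruyswijk type group-algebra argument in which the choice of auxiliary elements is randomized. The randomization is meant to remove the factor $\log(|G|/n)$. Put $r=\rr(G)$ and $n=\exp(G)$. As observed just before the statement, $G$ embeds in $C_n^r$, and a zero-sum-free sequence over a subgroup is zero-sum free over the whole group. So it suffices to show $\dd(C_n^r)<\frac{16}{5}rn$. The second assertion then follows from $\delta(G)\le\DD(G)-\DD^*(G)<\DD(G)$.

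\emph{Step 1 (group-algebra criterion).} Fix a zero-sum-free sequence $S=g_1\cdots g_\ell$ over $C_n^r$. Work in a group algebra $R[C_n^r]$, with $R$ either $\ZZ/p$ for a suitable prime or $\mathbb C$, writing $X^g$ for group elements. The classical observation is that zero-sum freeness makes the elements $X^{\sigma(T)}$, $T\mid S$, behave independently enough that products of the form
$$
\prod_{i\in I}\bigl(a_i-X^{g_i}\bigr),
$$
with $I$ of prescribed size and suitable scalars $a_i$, cannot vanish. On the other hand, such a product must vanish once $|I|$ exceeds a dimension/annihilator count. In the vEBK--Meshulam argument one splits $S$ into blocks, kills one character class at a time, and pays a factor $\log(|G|/n)$ because each block only halves the number of surviving characters.

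\emph{Step 2 (random weights).} I would instead choose the scalars $a_i$, or equivalently a random subset of characters $\chi$ on which to evaluate $\prod_i\bigl(1-\chi(g_i)\bigr)$, from a product distribution adapted to the coordinates of $C_n^r$. For a zero-sum-free $S$, the expected number of characters not yet annihilated should then decay geometrically in $\ell/n$ \emph{per coordinate}, not globally. Concretely, I aim for an estimate of the shape
$$
\mathbb E\,\#\{\text{surviving characters}\}\le n^{r}\,\theta^{\ell/n},\qquad \theta\le n^{-5r/(16r)\cdot(\cdot)},
$$
arranged so that once $\ell\ge\frac{16}{5}rn$ the expectation drops below $1$. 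Some choice of weights then annihilates everything, which forces a nonempty zero-sum subsequence. The constant $16/5$ should come from optimizing a one-variable function, namely the survival probability per step as a function of the sampling parameter. I would carry out this optimization last, once the inequality is in place.

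\emph{Main obstacle.} The hard step is Step 2: proving that each block of about $n$ terms reduces the surviving set by a constant factor \emph{uniformly in the rank}, without losing a $\log|G|$ from a union bound over all $n^r$ characters. If that cannot be made to work, I would fall back on a multiplicative reduction over the prime factorization of $n$. This would use the exact values $\dd(H)=r(q-1)$ for the $p$-parts $H$ and an inequality bounding $\dd(G)$ by $\dd(H)\exp(G/H)$ plus a quotient term. Summing the resulting geometric series over the primes would give a bound $c\,rn$. The delicate point there is controlling the quotient term linearly in $r$, and that is exactly where the probabilistic estimate of Step 2 would be needed.
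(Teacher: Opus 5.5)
\textbf{There is a genuine gap.} It sits exactly where you flag it. Step~2 is a hope, not an argument, and your fallback explicitly relies on it, so nothing in the plan yields a bound linear in $rn$. Your reduction to $C_n^r$ and the deduction of the $\delta$-bound are fine.

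\emph{Why Step~2 fails as set up.} Suppose a character $\chi$ is annihilated as soon as a \emph{single} factor $\chi(g_i)-a_i$ vanishes. This is what happens over $\mathbb C$, or whenever the whole group algebra is split into characters. Suppose also that the $a_i$ are independent and uniform on the $n$-th roots of unity. Then $\Pr(\chi(g_i)=a_i)=1/n$ exactly, so the expected number of surviving characters is exactly $n^r(1-1/n)^\ell$. This is below $1$ only when $\ell>r\log n/\log\frac{n}{n-1}>r(n-1)\log n$. The first-moment argument therefore just reproduces the van Emde Boas--Kruyswijk scale. There is no per-coordinate decay to exploit, and your $\theta$ is never actually defined.

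\emph{Why the fallback fails.} The valid subgroup inequalities lose too much:
\begin{itemize}
\item $\DD(G)\le\DD(H)\DD(G/H)$ loses a factor $r$;
\item $\DD(G)\le(\DD(H)-1)\exp(G/H)+\eta(G/H)$ picks up a term that can be exponential in $r$, since $\eta(C_2^r)=2^r$;
\item the version with quotient term $\dd(G/H)$ is false. For $G=C_2^4\oplus C_6$ and $H=C_3$ it would give $\dd(G)\le 2\cdot2+5=9$, whereas the paper's $m=1$ sequence has length $10$.
\end{itemize}

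\textbf{The missing idea.} You need an annihilation criterion in which each component dies only after \emph{many} matches, combined with a union bound over \emph{few} characters. The paper gets this by working in characteristic $p$ dividing $|G|$.
\begin{itemize}
\item Choose $p$ so that $P=p^{v_p(E)}$ is the largest prime-power divisor of $E=\exp(G)$.
\item Write $G=H\oplus A$, with $H$ the Sylow $p$-subgroup and $\exp(A)=Q=E/P$, and write $g_i=(h_i,\alpha_i)$.
\item Embed $\overline{\FF_p}[G]$ into $\prod_{\chi\in\widehat A}\overline{\FF_p}[H]$.
\item The augmentation ideal $I$ of $\overline{\FF_p}[H]$ satisfies $I^{b+1}=0$ with $b\le r(P-1)$.
\item Take $F=\prod_i(X^{g_i}-\xi_i)$ with $\xi_i$ uniform on the $Q$-th roots of unity. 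Its $\chi$-component vanishes once the number $Z_\chi$ of indices with $\xi_i=\chi(\alpha_i)$ is at least $b+1$. Meanwhile $F\ne0$ by zero-sum freeness.
\end{itemize}

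Now set $c=16/5$ and $u=1-1/c$. The tail bound $\Pr(Z_\chi\le b)\le c^b(1-u/Q)^L$ is roughly $e^{-(c-1-\log c)rP}$ for $L\approx crPQ$. The union bound costs only $\log|A|\le r\log Q$. Since $E\mid\lcm(1,\ldots,P)$, one has $\log Q\le\psi(P)-\log P$, which is again linear in $P$. This is where choosing the \emph{largest} prime power is essential.

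The constant $16/5$ is fixed by the resulting condition $\psi(P)-(c-1-\log c)P-\log(cP)+u<0$, which the paper verifies using Rosser--Schoenfeld bounds on $\psi$. None of these ingredients appear in your plan: the modular nilpotency, the $p$/$p'$ splitting, the prime-power choice, or the Chebyshev-type estimate.
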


The proof is independent of the lower constructions. We first
give the explicit number-theoretic estimates and the elementary
group-algebra facts that it needs.

\begin{lemma}\label{ub:lem:lcm}
Let $\psi(x)=\sum_{\ell^a\le x}\log\ell$, where $\ell$ ranges
over primes and $a\ge1$. For integers $n\ge1$,
$\psi(n)=\log\lcm(1,\ldots,n)$. For all real $x>0$,
$$
\psi(x)<\min\{1.03883x,\ 1.01624x+1.42620\sqrt{x}\}.
$$
\end{lemma}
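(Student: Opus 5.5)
The plan is to prove the identity directly and to take both numerical inequalities from the explicit Chebyshev estimates of Rosser and Schoenfeld (\emph{Approximate formulas for some functions of prime numbers}, Illinois J. Math.\ 1962), rather than re-deriving them.

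\textbf{The identity.} For an integer $n\ge1$ and a prime $\ell$, the exponent of $\ell$ in $\lcm(1,\ldots,n)$ is the largest $a$ with $\ell^a\le n$, namely $\lfloor\log n/\log\ell\rfloor$. This equals the number of integers $a\ge1$ with $\ell^a\le n$. Hence
$$
\log\lcm(1,\ldots,n)=\sum_{\ell}\#\{a\ge1:\ell^a\le n\}\log\ell=\sum_{\ell^a\le n}\log\ell=\psi(n).
$$
Primes $\ell>n$ contribute zero on both sides, so the sums are finite. For $n=1$ both sides vanish.

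\textbf{First bound.} The estimate $\psi(x)<1.03883\,x$ for all $x>0$ is Theorem~12 of Rosser--Schoenfeld. Their maximum of $\psi(x)/x$ is attained just below $x=113$, and the constant $1.03883$ is a rounded-up value of that maximum. I would cite it verbatim. For $0<x<2$ there is nothing to check, since $\psi(x)=0$.

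\textbf{Second bound.} Split $\psi=\theta+(\psi-\theta)$, where $\theta(x)=\sum_{\ell\le x}\log\ell$. Two inputs are needed:
\begin{itemize}
\item Theorem~9 of the same paper gives $\theta(x)<1.01624\,x$ for all $x>0$.
\item Their inequality (3.39) gives $\psi(x)-\theta(x)<1.42620\sqrt{x}$ for $x>0$.
\end{itemize}
For the second input one may also note
$$
\psi(x)-\theta(x)=\sum_{a\ge2}\theta(x^{1/a})=\theta(\sqrt x)+O(x^{1/3}\log x),
$$
with the explicit constant taken from their tables. Adding the two inequalities gives $\psi(x)<1.01624\,x+1.42620\sqrt x$. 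The minimum of the two bounds is then a trivial consequence.

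\textbf{Main obstacle.} The difficulty is bookkeeping rather than mathematics. I must confirm the exact ranges of validity ($x>0$ rather than $x\ge x_0$) and the exact decimals in the cited statements. If a cited range starts later, say at $x\ge1$, I would close the gap by direct inspection. This is easy because $\psi$ vanishes on $(0,2)$ and is a step function, so only finitely many prime-power jump points need checking.
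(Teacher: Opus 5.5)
Your proof is correct and is essentially the paper's: you obtain the identity by reading off the exponent of each prime in $\lcm(1,\ldots,n)$, and you take both inequalities from Rosser--Schoenfeld. The paper cites their Theorems~9, 12 and~13, all valid for $x>0$, and gets the second bound by adding $\vartheta(x)<1.01624x$ and $\psi(x)-\vartheta(x)<1.42620\sqrt{x}$, exactly as you do.

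One harmless slip: since $\psi$ is right-continuous, the maximum of $\psi(x)/x$ is attained at $x=113$ itself, where $\psi(113)/113\approx1.03882$, and not just below it, where the ratio is only about $0.997$.
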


\begin{proof}
The identity follows by prime factorization of the least common
multiple. Writing $\vartheta(x)=\sum_{\ell\le x}\log\ell$,
Rosser and Schoenfeld \cite[Theorems~9, 12, and~13]{RS62} proved
$\vartheta(x)<1.01624x$, $\psi(x)<1.03883x$, and
$\psi(x)-\vartheta(x)<1.42620\sqrt{x}$ for all $x>0$.
Adding the first and third estimates gives the second bound.
\end{proof}

\begin{remark}
The estimate in Lemma~\ref{ub:lem:lcm} already
suffices to obtain the slightly weaker bound
$\DD(G)\leq 3.21 \rr(G)\exp(G)$
by the argument below without using Lemma \ref{ub:lem:numerical}.  While, Lemma \ref{ub:lem:numerical} allows us to use
the slightly smaller constant $c=16/5$.
\end{remark}

\begin{lemma}\label{ub:lem:numerical}
Put $c=16/5$, $u=1-1/c=11/16$, and $h=c-1-\log c$.
For every real $x\ge2$,
$$
\psi(x)-hx-\log(cx)+u<0.
$$
\end{lemma}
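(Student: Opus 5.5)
The plan is to split the range $x\ge2$ into a large-$x$ tail, handled analytically with the second bound of Lemma~\ref{ub:lem:lcm}, and a bounded initial segment, handled by a finite check at integers. First I record the numerical values. We have $h=c-1-\log c=2.2-\log 3.2\approx 1.03685$ and $u=0.6875$. Note that $h<1.03883$, so the first estimate $\psi(x)<1.03883x$ alone does not suffice. The second estimate $\psi(x)<1.01624x+1.42620\sqrt{x}$ does, because its linear coefficient is smaller than $h$.

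\emph{Tail.} By Lemma~\ref{ub:lem:lcm},
$$
\psi(x)-hx-\log(cx)+u<-(h-1.01624)\,x+1.42620\sqrt{x}-\log(cx)+u .
$$
Here $h-1.01624\approx0.02061$. The function $g(x)=-0.0206x+1.4262\sqrt{x}-\log(3.2x)+0.6875$ is decreasing once $\sqrt{x}$ exceeds about $35$. It is negative from some explicit point $X_0$ on; a rough evaluation gives $g(4800)<0$, since $0.02061\cdot4800\approx98.9$, $1.4262\sqrt{4800}\approx98.8$ and $\log(15360)\approx9.64$. I would take $X_0=5000$ and verify $g(X_0)<0$ directly. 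Together with monotonicity of $g$ on $[X_0,\infty)$, this settles $x\ge X_0$.

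\emph{Initial segment.} For $2\le x<X_0$, use that $\psi$ is a right-continuous step function that jumps only at prime powers, hence is constant on each interval $[n,n+1)$ with $n$ an integer. On such an interval the remaining part $-hx-\log(cx)+u$ is strictly decreasing. So the supremum over $[n,n+1)$ is attained at $x=n$. It therefore suffices to check the finitely many inequalities
$$
\log\lcm(1,\ldots,n)<hn+\log(cn)-u\qquad(2\le n<X_0),
$$
using the identity $\psi(n)=\log\lcm(1,\ldots,n)$ from Lemma~\ref{ub:lem:lcm}. For example, at $n=2$ the left side is $\log2\approx0.69$ and the right side is $\approx2.07+1.86-0.69=3.24$.

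\emph{Main obstacle.} The hard step is the finite verification up to $X_0\approx5000$. It is routine by computer, but not by hand, because $\psi(n)/n$ fluctuates near $1$ while $h\approx1.037$ leaves only a small margin, eased by the $\log(cn)$ term. To shrink the range I would first try sharper explicit Chebyshev bounds valid beyond moderate $x$. An example is Rosser--Schoenfeld's $\vartheta(x)<x(1+1/(2\log x))$ combined with $\psi-\vartheta<1.4262\sqrt{x}$, which would lower $X_0$. Failing that, I would simply tabulate $\psi(n)$ at prime powers below $X_0$. Only those points need checking, since between consecutive prime powers the left-endpoint argument above applies.
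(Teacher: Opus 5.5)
Your proof is sound, and its large-$x$ half is the same as the paper's. The paper also bounds the left side for large $x$ by $f_2(x)=(1.01624-h)x+1.42620\sqrt{x}-\log x-\log c+u$. It starts this at $x=4000$, where $f_2(4000)<-0.96$ and $f_2'<-0.009$, rather than at your $X_0=5000$. Your reduction of the bounded range to the inequalities $\log\lcm(1,\ldots,n)<hn+\log(cn)-u$ at prime powers $n<X_0$ is also valid.

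The real difference is on the bounded range, where you dismiss $\psi(x)<1.03883x$ too quickly. That bound fails only asymptotically. If you keep the term $-\log(cx)+u$, the paper bounds the left side on $[2,4000]$ by the convex function
$f_1(x)=(1.03883-h)x-\log x-\log c+u$.
Its slope $1.03883-h<0.002$ is absorbed by the logarithm until roughly $x\approx4480$, and convexity reduces everything to checking $f_1(2)<-1.16$ and $f_1(4000)<-0.80$.

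So the first Rosser--Schoenfeld bound is effective up to about $4480$ and the second from about $3900$ on. Splitting at $4000$ gives a complete proof made of a few explicit decimal inequalities, checkable by hand (the logarithms come from the series for $\log\frac{1+z}{1-z}$).

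Your route instead rests on tabulating $\psi$ at several hundred prime powers below $5000$. That check does succeed, since the inequality is true there with margin. But you assert it rather than carry it out, and it makes the lemma depend on machine computation that is not needed. The ``main obstacle'' you identify disappears once the first bound of Lemma~\ref{ub:lem:lcm} is used on the bounded range.
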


\begin{proof}
On $[2,4000]$, Lemma~\ref{ub:lem:lcm} bounds the left side above by
$$
f_1(x)=(1.03883-h)x-\log x-\log c+u.
$$
This function is convex, since $f_1''(x)=1/x^2>0$.
The estimates
$$
1.16315<\log c<1.16316,\qquad
\log2>0.69314,\qquad \log4000>8.29404
$$
give $h>1.03684$, and hence
$$
f_1(2)<-1.16,\qquad f_1(4000)<-0.80.
$$
Convexity therefore gives $f_1(x)<0$ throughout this interval.

For $x\ge4000$, use instead the upper bound
$$
f_2(x)=(1.01624-h)x+1.42620\sqrt{x}-\log x-\log c+u.
$$
Since $63<\sqrt{4000}<63.246$, the same logarithmic estimates give
$f_2(4000)<-0.96$. Moreover,
$$
f_2'(x)=1.01624-h+\frac{0.71310}{\sqrt{x}}-\frac1x
<-0.02060+\frac{0.71310}{63}<-0.009.
$$
Thus $f_2(x)<0$ for all $x\ge4000$.

The decimal constants in these comparisons are terminating
decimals, hence exact rational numbers. For completeness, the
logarithmic estimates follow by taking $N=12$ in
$$
\log\frac{1+z}{1-z}
=2\sum_{j=0}^{N-1}\frac{z^{2j+1}}{2j+1}+R_N,
\qquad
0<R_N<\frac{2z^{2N+1}}{(2N+1)(1-z^2)}
\quad(0<z<1),
$$
using $z=1/3,1/9,3/253$ and the identities
$\log c=2\log2-\log(5/4)$ and
$\log4000=12\log2-\log(128/125)$.
\end{proof}

For a field $K$ and a finite abelian group $B$, write $K[B]$
for its group algebra, with basis $\{X^b:b\in B\}$ and
multiplication $X^bX^{b'}=X^{b+b'}$. The augmentation map is
$$
\varepsilon:K[B]\longrightarrow K,\qquad
\varepsilon\left(\sum_{b\in B}c_bX^b\right)=\sum_{b\in B}c_b.
$$
Its kernel is the augmentation ideal.

\begin{lemma}\label{ub:lem:nilpotence}
Let
$$
H=\bigoplus_{j=1}^t C_{p^{a_j}},
\qquad
b=\sum_{j=1}^t(p^{a_j}-1),
$$
and let $K$ have characteristic $p$. If $I$ is the
augmentation ideal of $K[H]$, then $I^{b+1}=0$.
\end{lemma}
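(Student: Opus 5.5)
The plan is the classical Olson-type argument. Let $x_j$ be the generator of the $j$-th cyclic factor $C_{p^{a_j}}$, and put $u_j=X^{x_j}-1\in I$. Everything rests on two observations:
$$u_j^{p^{a_j}}=0 \qquad\text{and}\qquad I=(u_1,\ldots,u_t),$$
the latter as an ideal of $K[H]$.

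\textbf{Step 1: nilpotence of each $u_j$.} In characteristic $p$ the Frobenius map is additive on the commutative ring $K[H]$. Hence
$$u_j^{p^{a_j}}=X^{p^{a_j}x_j}-1=X^0-1=0.$$

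\textbf{Step 2: $I$ is generated by $u_1,\ldots,u_t$.} The augmentation ideal is spanned over $K$ by the elements $X^g-1$ with $g\in H$, since
$$\sum c_gX^g=\sum c_g(X^g-1)+\varepsilon\Bigl(\sum c_gX^g\Bigr).$$
Write $g=\sum_j k_jx_j$ with $k_j\ge0$. Then the identity $X^{a+b}-1=X^a(X^b-1)+(X^a-1)$ shows that $X^g-1$ lies in the ideal generated by the elements $X^{x_j}-1$. More concretely, $X^{kx_j}-1=(1+u_j)^k-1$ is divisible by $u_j$.

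\textbf{Step 3: expand $I^{b+1}$.} By Step 2, $I^{b+1}$ is generated as an ideal by the monomials $u_1^{e_1}\cdots u_t^{e_t}$ with $\sum_j e_j=b+1$. Since $b+1=\sum_j(p^{a_j}-1)+1$, the pigeonhole principle gives some index $j$ with $e_j\ge p^{a_j}$. That monomial therefore vanishes by Step 1. Hence every generator is zero, and $I^{b+1}=0$.

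There is no real obstacle. The only point needing care is Step 2, namely that ideal generation by the $u_j$ suffices even though $I$ is defined as a kernel. The identity $X^{a+b}-1=X^a(X^b-1)+(X^a-1)$ handles this cleanly, since nonnegative coefficients $k_j$ are available because each $x_j$ has finite order.
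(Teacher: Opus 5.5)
Your proof is correct. It differs from the paper's in how it shows that the augmentation ideal is generated by the elements $u_j=X^{x_j}-1$.

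The paper takes a structural route. It builds the surjection $K[Y_1,\ldots,Y_t]\to K[H]$, $Y_j\mapsto X^{e_j}-1$, and upgrades it by a dimension count to an isomorphism $K[Y_1,\ldots,Y_t]/(Y_1^{p^{a_1}},\ldots,Y_t^{p^{a_t}})\cong K[H]$. It then identifies $I$ with the image of the ideal $J=(\overline Y_1,\ldots,\overline Y_t)$ by computing $\ker(\varepsilon\circ\overline\varphi)$ through constant terms. Finally it runs the same pigeonhole count inside the truncated polynomial ring.

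You instead argue directly. You note that $I$ is $K$-spanned by the elements $X^g-1$, and the identity $X^{g+g'}-1=X^{g}(X^{g'}-1)+(X^{g}-1)$ places each of them in $(u_1,\ldots,u_t)$. From there your pigeonhole step is identical to the paper's.

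What each route buys:
\begin{itemize}
\item Your argument is shorter and needs neither the dimension count nor any description of $K[H]$ as a quotient of a polynomial ring.
\item Your argument never uses that the $x_j$ form a basis, so it gives $I^{b+1}=0$ with $b=\sum_j(\ord(x_j)-1)$ for any generating set of the $p$-group $H$.
\item The paper's argument is longer but yields the structural fact that $K[H]$ is a truncated polynomial algebra, with $I$ corresponding to the ideal generated by the variables. For the nilpotence statement alone, only the surjectivity of $\overline\varphi$ is actually needed.
\end{itemize}
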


\begin{proof}
Choose generators $e_1,\ldots,e_t$ such that
$$
H=\langle e_1\rangle\oplus\cdots\oplus\langle e_t\rangle,
\qquad \operatorname{ord}(e_j)=p^{a_j}.
$$
Put
\begin{equation}\label{equation:yj=Xej-1}
y_j=X^{e_j}-1\in K[H],
\end{equation} where $1=X^0$ is the
multiplicative identity of the group algebra.
Since $\operatorname{char}(K)=p$, we have
$$
y_j^{p^{a_j}}
=(X^{e_j}-1)^{p^{a_j}}
=X^{p^{a_j}e_j}-1=0.
$$
Consider the $K$-algebra homomorphism
$$
\varphi:K[Y_1,\ldots,Y_t]\longrightarrow K[H],
\qquad
Y_j\longmapsto y_j\quad(1\le j\le t).
$$
Since every $h\in H$ can be written as
$h=\sum_{j=1}^t b_je_j$, where $0\le b_j<p^{a_j}$, it follows from \eqref{equation:yj=Xej-1} that
$\varphi\left(\prod_{j=1}^t(1+Y_j)^{b_j}\right)
=\prod_{j=1}^t(X^{e_j})^{b_j}
=X^h.$
Thus the image of $\varphi$ contains the $K$-basis
$\{X^h:h\in H\}$ of $K[H]$, so $\varphi$ is surjective.

Since $\varphi(Y_j^{p^{a_j}})=y_j^{p^{a_j}}=0$ for every $j$,
the ideal $(Y_1^{p^{a_1}},\ldots,Y_t^{p^{a_t}})$ is contained
in $\ker\varphi$. Hence $\varphi$ induces a $K$-algebra epimorphism
$$
\overline{\varphi}:
K[Y_1,\ldots,Y_t]/
(Y_1^{p^{a_1}},\ldots,Y_t^{p^{a_t}})
\longrightarrow K[H].
$$
Let $\overline Y_j$ denote the residue class of $Y_j$
in this quotient. The monomials
$$
\overline Y_1^{\,b_1}\cdots\overline Y_t^{\,b_t},
\qquad
0\le b_j<p^{a_j}\quad(1\le j\le t),
$$
form a $K$-basis of the quotient. Thus both algebras have
dimension $\prod_{j=1}^t p^{a_j}=|H|$ over $K$.
Since $\overline{\varphi}$ is surjective, it is also
injective, and hence is a $K$-algebra isomorphism.

Let $\varepsilon:K[H]\longrightarrow K$ be the augmentation
map, and let $J$ be the ideal of
$$
K[Y_1,\ldots,Y_t]/
(Y_1^{p^{a_1}},\ldots,Y_t^{p^{a_t}})
$$
generated by $\overline Y_1,\ldots,\overline Y_t$.
For every $j$, we have
$$
\varepsilon\bigl(\overline{\varphi}(\overline Y_j)\bigr)
=\varepsilon(y_j)
=\varepsilon(X^{e_j}-1)=1-1=0.
$$
Since both maps are $K$-algebra homomorphisms, every
polynomial $f\in K[Y_1,\ldots,Y_t]$ satisfies
\begin{equation}\label{equuation:rightside}
(\varepsilon\circ\overline{\varphi})
\bigl(f(\overline Y_1,\ldots,\overline Y_t)\bigr)
=f\bigl(\varepsilon(y_1),\ldots,\varepsilon(y_t)\bigr)
=f(0,\ldots,0).
\end{equation}
If $f,g\in K[Y_1,\ldots,Y_t]$ represent the same element
of the quotient, then
$f-g=\sum_{j=1}^t Y_j^{p^{a_j}}q_j$
for suitable polynomials $q_j\in K[Y_1,\ldots,Y_t]$.
Since each $p^{a_j}\ge1$, evaluating at
$Y_1=\cdots=Y_t=0$ gives
$f(0,\ldots,0)=g(0,\ldots,0)$.
Thus the value $f(0,\ldots,0)$ on the right-hand side of
\eqref{equuation:rightside}, which is the constant term of $f$,
is independent of the chosen representative.
Consequently, $\varepsilon\circ\overline{\varphi}$ sends
each element of the quotient to this well-defined constant term.

Since $\varepsilon\circ\overline{\varphi}$ sends every
generator $\overline Y_j$ of $J$ to zero, we have
$$
J\subseteq\ker(\varepsilon\circ\overline{\varphi}).
$$
Conversely, if $f(\overline Y_1,\ldots,\overline Y_t)$
belongs to this kernel, then $f(0,\ldots,0)=0$.
Every monomial occurring in $f$ is therefore divisible
by at least one variable, so
$f=\sum_{j=1}^t Y_jf_j$
for suitable polynomials $f_j\in K[Y_1,\ldots,Y_t]$.
Passing to residue classes shows that
$f(\overline Y_1,\ldots,\overline Y_t)\in J$.
Therefore,
$\ker(\varepsilon\circ\overline{\varphi})=J.$
Since $\overline{\varphi}$ is an isomorphism, it follows that
$$
\overline{\varphi}(J)=\ker\varepsilon=I.
$$

Recall that $b=\sum_{j=1}^t(p^{a_j}-1)$.
Since $J$ is generated by $\overline Y_1,\ldots,\overline Y_t$,
expanding products of $b+1$ elements of $J$ shows that
$J^{b+1}$ is generated by all products of $b+1$ of these
generators, with repetitions allowed.
By commutativity, these products have the form
$$
\overline Y_1^{\,c_1}\cdots\overline Y_t^{\,c_t},
\qquad
c_1,\ldots,c_t\in\mathbb Z_{\ge0},
\quad
\sum_{j=1}^t c_j=b+1.
$$
For each such monomial, at least one index $j$ satisfies
$c_j\ge p^{a_j}$, since otherwise
$
\sum_{j=1}^t c_j
\le\sum_{j=1}^t(p^{a_j}-1)=b.
$
Since $\overline Y_j^{\,p^{a_j}}=0$ in the quotient,
every such monomial vanishes. Hence $J^{b+1}=0$.
As $\overline{\varphi}(J)=I$ and $\overline{\varphi}$
is an algebra isomorphism, we obtain
$
I^{b+1}
=\overline{\varphi}(J)^{b+1}
=\overline{\varphi}(J^{b+1})
=0.$
\end{proof}

\begin{lemma}\label{ub:lem:characters}
Let $H$ be a finite abelian $p$-group, let $A$ be a finite
abelian group with $p\nmid|A|$, and let $K=\overline{\FF_p}$.
Writing $\widehat A=\operatorname{Hom}(A,K^\times)$, the map
$$
\Phi:K[H\oplus A]\longrightarrow
\prod_{\chi\in\widehat A}K[H],
\qquad
X^{(h,a)}\longmapsto\bigl(\chi(a)X^h\bigr)_\chi
$$
is an injective $K$-algebra homomorphism.
\end{lemma}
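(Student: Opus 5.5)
We must show that $\Phi$ is well defined as a $K$-algebra homomorphism and that it is injective. For the homomorphism property, we define $\Phi$ on the basis $\{X^{(h,a)}\}$ of $K[H\oplus A]$ and extend $K$-linearly. It then suffices to check multiplicativity on basis elements and that $\Phi(1)=1$. For each character $\chi$, the $\chi$-component of $\Phi\bigl(X^{(h,a)}X^{(h',a')}\bigr)=\Phi\bigl(X^{(h+h',a+a')}\bigr)$ equals $\chi(a+a')X^{h+h'}=\chi(a)\chi(a')X^hX^{h'}$, since $\chi$ is a homomorphism into $K^\times$. This is exactly the $\chi$-component of $\Phi(X^{(h,a)})\Phi(X^{(h',a')})$, because multiplication in the product algebra is componentwise. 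Also $\Phi(X^{(0,0)})=(1)_\chi$, so $\Phi$ is a unital $K$-algebra homomorphism.

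For injectivity, we use the decomposition $K[H\oplus A]=\bigoplus_{a\in A}K[H]\,X^{(0,a)}$. Every element can then be written uniquely as $f=\sum_{a\in A}f_aX^{(0,a)}$ with $f_a\in K[H]$, where $K[H]$ is identified with the span of the $X^{(h,0)}$. By construction $\Phi(f)=\bigl(\sum_a\chi(a)f_a\bigr)_\chi$. Suppose $\Phi(f)=0$. Expanding each $f_a=\sum_h c_{h,a}X^h$ in the basis of $K[H]$, we obtain for every $h\in H$ and every $\chi\in\widehat A$
$$
\sum_{a\in A}\chi(a)c_{h,a}=0.
$$
So it suffices to show that the $|\widehat A|\times|A|$ matrix $\bigl(\chi(a)\bigr)_{\chi,a}$ has trivial kernel over $K$.

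This is the key step, and it rests on two facts: since $p\nmid|A|$, there are $|A|$ distinct characters, and they are linearly independent.

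\emph{Counting characters.} Write $A\cong\bigoplus_i C_{d_i}$ with $p\nmid d_i$. The polynomial $x^{d_i}-1$ is separable over $K$, because its derivative $d_ix^{d_i-1}$ shares no root with it when $p\nmid d_i$. Since $K$ is algebraically closed, $K^\times$ therefore contains a cyclic group of $d_i$-th roots of unity of order exactly $d_i$. Consequently $\widehat A\cong A$ and $|\widehat A|=|A|$.

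\emph{Separating points.} For $a\neq0$, choose a coordinate $i$ in which $a$ is nonzero and map that coordinate to a primitive $d_i$-th root of unity; this gives a character $\chi$ with $\chi(a)\ne1$.

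\emph{Orthogonality.} For $a\neq0$, pick $\chi_0$ with $\chi_0(a)\neq1$. Reindexing the sum by $\chi\mapsto\chi\chi_0$ gives $\sum_\chi\chi(a)=\chi_0(a)\sum_\chi\chi(a)$, hence $\sum_\chi\chi(a)=0$. For $a=0$ the sum is $|A|$, which is nonzero in $K$ because $p\nmid|A|$.

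Now fix $b\in A$, multiply the relation $\sum_a\chi(a)c_{h,a}=0$ by $\chi(-b)$, and sum over $\chi$. By orthogonality,
$$
\sum_{a\in A}c_{h,a}\sum_{\chi\in\widehat A}\chi(a-b)=|A|\,c_{h,b}=0.
$$
Hence $c_{h,b}=0$ for all $h$ and $b$, so $f=0$ and $\Phi$ is injective.

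The only delicate point is the invertibility of $|A|$ in characteristic $p$, which is exactly where the hypothesis $p\nmid|A|$ enters, together with the existence of enough roots of unity in $K=\overline{\FF_p}$. Everything else is formal.
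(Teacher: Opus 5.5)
Your proposal is correct and follows essentially the same route as the paper: you check multiplicativity on basis elements, reduce injectivity to the relations $\sum_{a}c_{h,a}\chi(a)=0$, multiply by $\chi(-b)$, sum over $\widehat A$, and use orthogonality together with $|A|\neq0$ in $K$. The only difference is that you derive $|\widehat A|=|A|$ and the orthogonality relation yourself (via separability of $x^{d}-1$, point separation, and the reindexing trick), whereas the paper simply cites standard character theory for these facts.
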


\begin{proof}
Since $A$ is finite abelian and $K$ is algebraically closed
with $\operatorname{char}(K)\nmid |A|$, standard character
theory gives $|\widehat A|=|A|$ and the orthogonality relation
\begin{equation}\label{equation:takesallchi}
\sum_{\chi\in\widehat A}\chi(a)=
\begin{cases}
|A|,&a=0,\\
0,&a\ne0.
\end{cases}
\end{equation}

Each component of $\Phi$ is a $K$-algebra homomorphism,
since $\chi(a+a')=\chi(a)\chi(a')$ for every
$\chi\in\widehat A$ and all $a,a'\in A$.
Thus $\Phi$ is a $K$-algebra homomorphism.

To prove injectivity, let
$$
F=\sum_{h\in H,\ a\in A}c_{h,a}X^{(h,a)}
\in\ker\Phi.
$$
For every $\chi\in\widehat A$, the corresponding component
of $\Phi(F)$ is zero, so
$$
\sum_{h\in H}
\left(\sum_{a\in A}c_{h,a}\chi(a)\right)X^h=0.
$$
Since $\{X^h:h\in H\}$ is a $K$-basis of $K[H]$, we obtain
\begin{equation}\label{equation:takesalla=0}
\sum_{a\in A}c_{h,a}\chi(a)=0
\qquad(h\in H,\ \chi\in\widehat A).
\end{equation}
Fix $h\in H$ and $a_0\in A$. Multiplying by $\chi(-a_0)$
and summing over all characters, by \eqref{equation:takesallchi} and \eqref{equation:takesalla=0}, we derive that
$$
0
=\sum_{a\in A}c_{h,a}
  \sum_{\chi\in\widehat A}\chi(a-a_0)
=|A|c_{h,a_0}.
$$
The scalar $|A|$ is nonzero in $K$, so $c_{h,a_0}=0$.
Since $h$ and $a_0$ were arbitrary, every coefficient of
$F$ is zero. Hence $\ker\Phi=\{0\}$, proving injectivity.
\end{proof}

\begin{proof}[Proof of Theorem~\ref{thm:rank-exponent}]
Write $r=\rr(G)$ and $E=\exp(G)$, and retain
$c,u,h$ from Lemma~\ref{ub:lem:numerical}. For a prime $\ell$, let
$v_\ell(E)$ denote the exponent of $\ell$ in the prime factorization
of $E$. Choose a prime $p\mid E$ such that
$$
P=p^{v_p(E)}=\max_{\ell\mid E}\ell^{v_\ell(E)},
$$
where the maximum is over prime divisors $\ell$ of $E$.
Let $H$ be the Sylow $p$-subgroup of $G$ and $A$ its complementary
Hall subgroup, i.e.,
$$
G=H\oplus A,\qquad
\exp(H)=P,\qquad
\exp(A)=Q=E/P,\qquad
p\nmid|A|.
$$
We may assume that $$Q\geq 2$$ since otherwise, $G$ is a $p$-group and $\DD(G)=\DD^*(G)\leq \rr(G)\exp(G)$ and $\delta(G)=0$ is well-known.
Every exact prime-power factor $\ell^{v_\ell(E)}$ of $E$ is at most
$P$, and these factors are pairwise coprime. Therefore
$$
E\mid\lcm(1,\ldots,P).
$$

By Lemma~\ref{ub:lem:lcm},
$\log Q=\log E-\log P\le\log\operatorname{lcm}(1,\ldots,P)-\log P=\psi(P)-\log P$.
Since $A\cong G\diagup H$ is a quotient group of $G$, it can be generated by at most $r$
elements. Its exponent is $Q$, so $A$ is a quotient of $C_Q^r$.
Consequently, $|A|\le Q^r$, and hence
\begin{equation}\label{ub:eq:hall-size}
\log|A|\le r\log Q\le r(\psi(P)-\log P).
\end{equation}

Write
$$
H=\bigoplus_{j=1}^t C_{p^{a_j}},
\qquad a_j\ge1,
$$
where $t$ is the $p$-rank of $G$. Then $t\le r$, and
$\exp(H)=P$ gives
$$
\max_{1\le j\le t}p^{a_j}=P.
$$
Consequently,
\begin{equation}\label{ub:eq:b-bound}
b=\sum_{j=1}^t(p^{a_j}-1)
\le t(P-1)\le r(P-1).
\end{equation}
Let $K=\overline{\FF_p}$, and let $I$ be the augmentation
ideal of $K[H]$. Lemma~\ref{ub:lem:nilpotence} gives
\begin{equation}\label{ub:eq:nilpotence}
I^{b+1}=0.
\end{equation}

To prove the conclusion, we suppose, to the contrary, that there exists a zero-sum free sequence
$T=g_1\cdots g_L$ of length $L=\lfloor crE\rfloor$. Write $g_i=(h_i,a_i)\in H\oplus A$ for each $i\in [1,L]$.
Let
$$
\Omega=\{\xi\in K^\times:\xi^Q=1\}.
$$
Because $p\nmid Q$, the set $\Omega$ has exactly $Q$
elements. Choose independent random variables $\xi_1,\ldots,\xi_L$
taking values in $\Omega$, each with the uniform distribution,
that is,
$$
\Pr(\xi_i=\omega)=\frac1Q
\qquad
(1\le i\le L,\ \omega\in\Omega).
$$
For a character
$\chi\in\widehat A$, define
$$
Z_\chi=\#\{i:\xi_i=\chi(a_i)\}.
$$

Now we fix $\chi\in\widehat A$. Since $\exp(A)=Q$, we have
$$
\chi(a_i)^Q=\chi(Qa_i)=\chi(0)=1
\qquad(1\le i\le L).
$$
Thus $\chi(a_i)\in\Omega$ for every $i$.
As $\xi_i$ is uniformly distributed on $\Omega$ and
$|\Omega|=Q$, it follows that
$$
\Pr\bigl(\xi_i=\chi(a_i)\bigr)=\frac1Q
\qquad(1\le i\le L).
$$
Each event $\{\xi_i=\chi(a_i)\}$ depends only on the
corresponding random variable $\xi_i$.
Since $\xi_1,\ldots,\xi_L$ are independent, these
matching events are mutually independent.

For the fixed character $\chi$, let $\mathbb E$ denote
the expected value of the random variable $c^{-Z_\chi}$,
that is,
$$
\mathbb E=\sum_{k=0}^{L}c^{-k}\Pr(Z_\chi=k).
$$
Since $c>1$, we have $c^{-k}\ge c^{-b}$ whenever $k\le b$.
All terms in this sum are nonnegative, so
\begin{equation}\label{equation:forE}
\mathbb E
\ge\sum_{\substack{0\le k\le L\\k\le b}}
c^{-k}\Pr(Z_\chi=k)\ge
c^{-b}\sum_{\substack{0\le k\le L\\k\le b}}
\Pr(Z_\chi=k)=c^{-b}\Pr(Z_\chi\le b).
\end{equation}

We next compute $\mathbb E$.
Since the matching events are independent and each has
probability $1/Q$, the probability of exactly $k$ matches is
$$
\Pr(Z_\chi=k)
=\binom{L}{k}\left(\frac1Q\right)^k
 \left(1-\frac1Q\right)^{L-k}, \qquad \text{where } 0\le k\le L.
$$
By the definition of expectation and the independence
of the matching events, we have
$$
\begin{aligned}
\mathbb E
&=\sum_{k=0}^{L}c^{-k}\Pr(Z_\chi=k)\\
&=\sum_{k=0}^{L}c^{-k}\binom{L}{k}
  \left(\frac1Q\right)^k
  \left(1-\frac1Q\right)^{L-k}\\
&=\sum_{k=0}^{L}\binom{L}{k}
  \left(\frac1{cQ}\right)^k
  \left(1-\frac1Q\right)^{L-k}\\
&=\left(1-\frac1Q+\frac1{cQ}\right)^L\\
&=\left(1-\frac{u}{Q}\right)^L,
\end{aligned}
$$
where the last equality uses $u=1-1/c$.
Multiplying the lower bound for $\mathbb E$ given in \eqref{equation:forE}
by $c^b$ and using $1-x\le\exp(-x)$, we obtain
\begin{equation}\label{equation:tail}
\Pr(Z_\chi\le b)
\le c^b\mathbb E
=c^b\left(1-\frac{u}{Q}\right)^L
\le\exp\left(b\log c-\frac{uL}{Q}\right).
\end{equation}
The estimate  holds for every
$\chi\in\widehat A$. Since $|\widehat A|=|A|$, $L\ge crPQ-1$ and $uc=c-1$, it follows from \eqref{ub:eq:hall-size},
\eqref{ub:eq:b-bound} and \eqref{equation:tail}  that
\begin{align}
\Pr\bigl(\exists\chi\in\widehat A:Z_\chi\le b\bigr)
&\le\sum_{\chi\in\widehat A}\Pr(Z_\chi\le b) \notag\\
&\le\sum_{\chi\in\widehat A}
\exp\left(b\log c-\frac{uL}{Q}\right)\notag\\
&=|A|\exp\left(b\log c-\frac{uL}{Q}\right) \notag\\
&\le\exp\left(
r\bigl(\psi(P)-\log P+(P-1)\log c-(c-1)P\bigr)
+\frac{u}{Q}\right) \notag\\
&=\exp\left(
r\bigl(\psi(P)-hP-\log(cP)\bigr)
+\frac{u}{Q}\right)<1.
\label{ub:eq:unionnew}
\end{align}
The last equality uses $h=c-1-\log c$, and the term $u/Q$ comes from the inequality $L\ge crPQ-1$.
Since $P\ge2$, Lemma~\ref{ub:lem:numerical} gives
$$
\psi(P)-hP-\log(cP)<-u.
$$
Consequently,
$$
r\bigl(\psi(P)-hP-\log(cP)\bigr)+\frac{u}{Q}
<-ru+\frac{u}{Q}<0,
$$
where the last inequality follows from
$r\ge1$, $Q\ge2$, and $u>0$.
Since $Z_\chi$ and $b$ are integers, it follows from \eqref{ub:eq:unionnew} that
$$\Pr\bigl(Z_\chi\ge b+1
          \text{ for every }\chi\in\widehat A\bigr)=
1-\Pr\bigl(\exists\chi\in\widehat A:Z_\chi\le b\bigr)>0.$$
Consequently, there exists
a choice of $\xi_1,\ldots,\xi_L$ such that
$$
Z_\chi\ge b+1
\qquad\text{for every }\chi\in\widehat A.
$$
Fix such a choice for the remainder of the proof.
Consider
$$
F=\prod_{i=1}^L(X^{g_i}-\xi_i)\in K[G].
$$
Under the homomorphism $\Phi$ from
Lemma~\ref{ub:lem:characters}, the $\chi$-component of
$\Phi(F)$ is
$$
\prod_{i=1}^L\bigl(\chi(a_i)X^{h_i}-\xi_i\bigr).
$$
Every matching index satisfies $\xi_i=\chi(a_i)$ and
therefore contributes a factor
$$
\chi(a_i)X^{h_i}-\xi_i
=\chi(a_i)(X^{h_i}-1)\in I.
$$
There are at least $b+1$ such factors. Since $K[H]$
is commutative, their product belongs to $I^{b+1}=0$
by \eqref{ub:eq:nilpotence}. Hence $\prod_{i=1}^L\bigl(\chi(a_i)X^{h_i}-\xi_i\bigr)=0$ for every $\chi\in\widehat A$,
so $\Phi(F)=0$. The injectivity of $\Phi$ gives $F=0$.

On the other hand, expanding the product gives
$$
F=\sum_{J\subseteq\{1,\ldots,L\}}
(-1)^{L-|J|}
\left(\prod_{i\notin J}\xi_i\right)
X^{\sum_{j\in J}g_j}.
$$
Since $T$ is zero-sum free, only $J=\varnothing$
contributes to the coefficient of $X^0$ in $F$.
This coefficient is
$$
(-1)^L\prod_{i=1}^L\xi_i,
$$
which is nonzero because every $\xi_i$ is nonzero.
Thus $F\ne0$, contradicting $F=0$.
Therefore every sequence of length $\lfloor crE\rfloor$
has a nonempty zero-sum subsequence, and hence
$\DD(G)\le\lfloor crE\rfloor
\le\frac{16}{5}rE,$ completing the proof of the theorem.
\end{proof}

\section*{Acknowledgements}
The author is grateful to Professor Alfred Geroldinger for his many
valuable comments and suggestions on an earlier version of this
paper, for drawing attention to \mbox{relevant} references and
problems.

\end{document}